\documentclass[11pt, reqno]{amsart}

\usepackage{amsfonts,amssymb,amsmath,amsthm, ulem}
\usepackage[headinclude,DIV13]{typearea}
\usepackage[utf8]{inputenc}
\usepackage{hyperref}
% \usepackage{mathrsfs,bbm,stmaryrd}
% \usepackage[color]{showkeys}
% \definecolor{refkey}{rgb}{1,0,0} 
% \definecolor{labelkey}{rgb}{0,0,1}
\usepackage{geometry}
\geometry{a4paper,portrait,left=2.8cm,right=2.8cm,top=2.5cm,bottom=1.5cm}

\usepackage{graphicx, psfrag,enumerate,enumitem}

\usepackage{mathrsfs}  

\usepackage{soul}
\usepackage{cancel}
\usepackage{color}
 \usepackage{setspace}

\newcommand{\al}[1]{{\color{black} #1}} 
\newcommand{\cha}[1]{{\color{black} #1}} 

\newtheorem{thm}{Theorem}[section]
\newtheorem{prop}[thm]{Proposition}

\newtheorem{cor}[thm]{Corollary}
\newtheorem{lemma}[thm]{Lemma}

\newtheorem{ass}{Assumption}

\newtheorem{rem}[thm]{Remark}
\numberwithin{equation}{section}

\newenvironment{customass}[1]
  {\innercustomass}
  {\endinnercustomass}
  
\theoremstyle{definition} 
\theoremstyle{definition}

\renewcommand{\P}{\mathbb{P}}
\newcommand{\R}{\mathbb{R}}

\newcommand{\E}{\mathbb{E}}

\newcommand{\N}{\mathbb{N}}

\newcommand{\eps}{\varepsilon}

\newcommand{\brho}{\mathfrak b}
\newcommand{\kapparq}{\mathfrak{d}}
\newcommand{\m}{\mathfrak m}

\newcommand{\ud}{\mathrm{d}}
\newcommand{\laplexp}{\phi}
\newcommand{\mintheta}{\vartheta}

\DeclareMathOperator{\argmin}{argmin}

\numberwithin{equation}{section}

\textheight = 660pt
\voffset=1 cm

\setlength{\marginparwidth}{2cm}

\begin{document}

\title[Parasite infection in a cell population]
{Parasite infection in a cell population: role of the partitioning kernel}

\author{Aline Marguet}
\address{Aline Marguet, Univ. Grenoble Alpes, INRIA, 38000 Grenoble, France}
\email{aline.marguet@inria.fr}

\author{Charline Smadi}
\address{Charline Smadi, Univ. Grenoble Alpes, INRAE, LESSEM, 38000 Grenoble, France
 and Univ. Grenoble Alpes, CNRS, Institut Fourier, 38000 Grenoble, France}
\email{charline.smadi@inrae.fr}

\date{}

\begin{abstract}
We consider a cell population subject to a parasite infection.
Cells divide at a constant rate and, at division, share the
parasites they contain between their two daughter cells. The
sharing may be asymmetric, and its law may depend on the
number of parasites in the mother. Cells die at a rate which may depend on the number of parasites they carry, and are also killed when this number explodes. We study the survival of the cell population as well as the mean number of parasites in the cells, and focus on the role of the parasites partitioning kernel at division.
 \end{abstract}
 
\maketitle

 \vspace{0.2in}

\noindent {\sc Key words and phrases}: Continuous-time branching Markov processes, Structured population, 
Long time behaviour, Birth and Death Processes

\bigskip

\noindent MSC 2000 subject classifications: 60J80, 60J85, 60H10.

 \vspace{0.5cm}

  \tableofcontents

\section*{Introduction}

We are interested in the modelling of a parasite infection in a cell population, and, in particular, in the role of the stochasticity of repartition of the parasites at cell division. From the pioneering work of Kimmel \cite{Kimmel}, several models and associated analysis have been proposed, both in discrete \cite{bansaye2008, bansaye2009, alsmeyer2013, alsmeyer2016} and continuous time \cite{BT11,BPS}. The asymmetric repartition of parasites is taken into account in all of those work: in \cite{bansaye2008, osorio2020level}, each parasite chooses to go to one daughter cell with probability $p$ (and to the other with probability $1-p$), and in \cite{bansaye2009}, a random environment is considered (the probability generating functions of the number of parasites at birth in the two daughter-cells of each cell in the population are i.i.d random variables). In branching-within-branching models, independently for each parasite and with the same distribution, the descendants are shared between the daughter cells. A different approach has been proposed in \cite{BT11}, removing the independence property of the sharing of parasites descending from different lineages. Following the dynamics of the ($\R$-valued) number of parasites inside the cells (rather than a discrete count), this model assumes that a cell with $x$ parasites, splits into two daughter cells with a number of parasites $\Theta x$ and $(1-\Theta)x$ respectively, with $\Theta$ a random variable on $[0,1]$. Here, we extend this approach and explore the role of the random variable $\Theta$ in the proliferation of the infection. We assume that cells divide at a constant rate. Their death rate may depend on the number of parasites they contain and they may additionally be killed when this number explodes. 
The dynamics of the number of parasites in a cell is given by a Stochastic Differential Equation (SDE) with drift, diffusion and positive jumps.
At division the parasites of a
cell are shared between its two daughters according to a
partitioning kernel which may depend on the number of
parasites. 
\al{Similar to the works} \cite{Kimmel,bansaye2008,bansaye2009,BT11,BPS,alsmeyer2013,alsmeyer2016,osorio2020level}, we are interested in the long time behaviour of the parasite infection. More precisely, we will study the number of cells alive, as well as the number of parasites in the cells at large time.
This work complements \cite{marguet2022spread}, where we considered division rates which could depend on the number of parasites contained in the cells but fixed partitioning kernels at division. \al{ Note that \cite{marguet2022spread} and this paper comes from the split of an earlier draft \cite{marguet2020parasite}, with additional results on the study of the effects of the partitioning on the fate of the cell population.} Assuming here that the division rate is constant allows 
us to consider partitioning kernels depending on the number of parasites in the cell at division, and to focus on the effect of the partitioning kernel on the long time behaviour of the infection. In particular, we compare partitioning strategies and show that a symmetric division (half of the
parasites in each daughter cell) is the worst choice for the cell population in terms of survival.
We give quantitative conditions on the level of infection for the cell population to survive, for both uniform and equal sharing partitioning kernels. We also explore numerically the difference between deterministic and a class of random partitioning laws, \al{highlighting the fact that randomness and asymmetry seem to be the keys to explain survival. Then, we prove that any partitioning kernel is better for survival than its deterministic counterpart with the same expected minimum value.}  Finally, we prove that whatever the growth of the parasites, there exist partitioning kernels enabling the cell population to survive the infection.

Our proof strategy consists in introducing a spinal decomposition.
It amounts to distinguishing a particular line of descent in the population, constructed from a size-biased tree \cite{LPP}, and to prove that the dynamics of the trait along this particular lineage is representative of the dynamics of the trait of a typical individual in the population, {\it i.e.} an individual picked uniformly at random. We refer to \cite{georgii2003supercritical,hardy2009spine,bansaye2011limit,cloez2017limit,marguet2016uniform, marguet2017law} for general results on these topics in the continuous-time setting.

The paper is structured as follows. In Section \ref{section_model}, we define the population process and give assumptions ensuring its existence and uniqueness as the strong solution to a SDE. Sections \ref{sec_mean_numb_cells} and \ref{sec:role-partitioning} are dedicated to the study of the asymptotic behaviour of the mean number of cells alive in the population for various dynamics for the parasites. In particular, we compare different strategies for the sharing of the parasites at division and give explicit conditions ensuring extinction or survival of the cell population. In Section \ref{section_beta}, we focus on 
the case of a parasites dynamics without stable positive jumps and study the asymptotic behaviour of the proportion of infected cells. 
Sections \ref{sec:MTO} and \ref{sec:proofs} are dedicated to the proofs.
\\

In the sequel $\N:=\{0,1,2,...\}$ will denote the set of nonnegative integers, $\R_+:=[0,\infty)$ the real line,  $\overline{\R}_+:=\R_+\cup \{ + \infty \}$,
and $\R_+^*:=(0,\infty)$.
We will denote by $\mathcal{C}_{0}^2(\R_+)$ the set of twice continuously differentiable functions on $\R_+$ vanishing at $0$ and infinity. Finally, for any stochastic 
process $X$ on $\overline{\mathbb{R}}_+$ or $Z$ on the set of point measures on $\overline{\mathbb{R}}_+$, we \al{use 
$\mathbb{E}_x\left[f(X_t)\right]$ and $\mathbb{E}_{\delta_x}\left[f(Z_t)\right]$ as shorthand for $\E\left[f(X_t)\big|X_0 = x\right]$ and $\E\left[f(Z_t)\big|Z_0 = \delta_x\right]$ respectively}.

\section{Definition of the population process} \label{section_model}

\subsection{Parasites dynamics in a cell}

Each cell contains parasites whose quantity, \al{denoted by $\mathfrak{X}_t$,} evolves as a diffusion with positive jumps. More precisely, we consider the SDE
\begin{align} \nonumber \label{X_sans_sauts} \mathfrak{X}_t =x + \int_0^t g(\mathfrak{X}_s)ds
+\int_0^t\sqrt{2\sigma^2 (\mathfrak{X}_s)}dB_s &+
\int_0^t\int_0^{p(\mathfrak{X}_{s^-})}\int_{\mathbb{R}_+}z\widetilde{Q}(ds,dx,dz)\\&+
\int_0^t\int_0^{\mathfrak{X}_{s^-}}\int_{\mathbb{R}_+}zR(ds,dx,dz),
\end{align}
where $x$ is nonnegative, $g$, $\sigma \geq 0$ and $p\geq0$ are real functions on $\overline{\mathbb{R}}_+$, $B$ is a standard Brownian motion,  
$\widetilde{Q}$ is a compensated Poisson point measure (PPM) with intensity 
$ds\otimes dx\otimes \pi(dz)$, $\pi$ is a nonnegative measure on $\mathbb{R}_+$,
$R$ is a PPM with intensity 
$ds\otimes dx\otimes \rho(dz)$, with
\begin{align*} 
%\label{def_rho}
\rho(dz)=\frac{c_\brho\brho(\brho+1)}{\Gamma(1-\brho)}\frac{1}{z^{2+\brho}}\ud z, \quad z \in \R_+
\end{align*}
where $\brho \in (-1,0)$ and $c_\brho<0$ (see \cite[Section 1.2.6]{kyprianou2006introductory} for details on stable distributions and processes).
Finally, $B$, $Q$ and $R$ are independent. \al{The function $g$ describes the deterministic part of the growth of the number of parasites. In particular, $g(x)=gx$, for some $g>0$, corresponds to an exponential growth. The diffusion term describes the demographic stochasticity of the parasites. Finally, the last two integrals correspond to two different type of jumps in the dynamic of the number of parasites, describing possible burst of parasites: jumps of finite size and jumps of possibly infinite size.}

\al{We will later provide conditions} under which the SDE \eqref{X_sans_sauts} has a unique nonnegative strong solution.
\al{Under these conditions}, it is a Markov process with infinitesimal generator $\mathcal{G}$, satisfying for all $f\in C_{0}^2(\mathbb{R}_+)$,
\begin{align} \label{def_gene}
\mathcal{G}f(x) = g(x)f'(x)+\sigma^2(x)f''(x)&+p(x)\int_{\mathbb{R}_+}\left(f(x+z)-f(x)-zf'(x)\right)\pi(dz)\\&+x\int_{\mathbb{R}_+}\left(f(x+z)-f(x)\right)\rho(dz)\nonumber ,
\end{align}
and $0$ and $+ \infty$ are two absorbing states.
Following \cite{marguet2016uniform}, we denote by $(\Phi(x,s,t),s\leq t)$ the corresponding stochastic flow {\it i.e.} 
the unique strong solution to \eqref{X_sans_sauts} satisfying $\mathfrak{X}_s = x$ and the dynamics of the trait between division events is well-defined.

\subsection{Cell division}

Each cell carrying a number $x$ of parasites divides at rate $r\al{>0}$ and is replaced by two 
daughter cells with number of parasites $\Theta(x,\zeta) x$ and $(1-\Theta(x,\zeta))x$, where $\Theta(x, \zeta)$ is a symmetric random variable on $(0,1)$, with associated distribution $\kappa(x,\cdot)$,
%distributed as $T(\zeta,x)$,
$\Theta$ is a measurable function from $\mathbb{R}_+\times [0,1]$ to $(0,1)$ and $\zeta$ is a uniform random variable on $[0,1]$. This formalism will prove useful for the
use of Poisson point measures. However, for the sake of simplicity, we will often omit \al{to show} the dependence in $\zeta$ and write $\Theta(x)$ for the random variable corresponding to the proportion of parasites at birth, instead of $\Theta(x,\zeta)$. Finally, we assume that $\sup_{x \in \R_+}|\E[\ln \Theta(x)]|<\infty$.

\subsection{Cell death}

Cells can die because of two mechanisms. 
First, they have a natural death rate $q(x)$ which depends on the number of parasites $x$ they carry. 
Second, cells die when the number of parasites they carry explodes ({\it i.e.}, reaches infinity in finite time), as a proper 
functioning of the cell is not possible anymore. 

\cha{\begin{rem}
To model the second mechanism of death we will use a technical trick consisting in letting cells with an infinite number of parasites exist and reproduce, giving birth to daughter cells with an infinite number of parasites. As it will appear later, this allows us to derive Many-to-one formulas (see Section \ref{sec:MTO}).
\end{rem}}

\subsection{\al{Host-parasite measure-valued process}} \label{host-para-Z}

We use the classical Ulam-Harris-Neveu notation to identify each individual. Let us denote by
\begin{equation*} \label{ulam_not} \mathcal{U}:=\bigcup_{n\in\mathbb{N}}\left\{0,1\right\}^{n}
\end{equation*}
the set of possible labels, $\mathcal{M}_P(\overline{\mathbb{R}}_+)$ the set of point measures on $\overline{\mathbb{R}}_+$, and 
$\mathbb{D}(\mathbb{R}_+,\mathcal{M}_P(\overline{\mathbb{R}}_+))$, the set of c\`adl\`ag measure-valued processes. 
\al{We denote by $Z$ the host-parasite measure-valued process: $Z\in \mathbb{D}(\mathbb{R}_+,\mathcal{M}_P(\overline{\mathbb{R}}_+))$, and for all $t \geq 0$,} 
\begin{equation} \label{Ztdirac}
Z_t = \sum_{u\in V_t}\delta_{X_t^u},
\end{equation}
where $V_t\subset\mathcal{U}$ denotes the set of individuals \cha{in the population} at time $t$ and $X_t^u$ the number of parasites \al{hosted by cell $u$} at time $t$. 
\cha{Recall that if $X_t^u<\infty$ the cell $u$ is alive at time $t$, and if $X_t^u=\infty$ the cell $u$ is dead at time $t$.}
By convention, $Z_t$ is the null measure if $V_t = \emptyset$.
By extension, for $u \in V_t$ and any $s \leq t$, $X_s^u$ denotes the number of parasites in the ancestor of $u$ in the population at time $s$. Thus, $(X_s^u, s \leq \sup(\mathfrak{t}\geq 0 \text{ s.t. } u \neq V_\mathfrak{t}))$ follows \eqref{X_sans_sauts} between events of division impacting the lineage under consideration.

\cha{Under technical assumptions presented in the appendices for the sake of readability (see Assumption \ref{ass_A} in Appendix \ref{append_unic_exi}), we can prove that the host-parasite measure-valued process $Z$ is well-defined as the unique solution of a SDE.}
\al{For the ease of presentation, we make the standing assumption that all appearing
processes satisfy Assumption \ref{ass_A}}.\\

We will now investigate the long time behaviour of the infection in the cell population.
As we explained above, the strategy to obtain information at the population level is to introduce an auxiliary process providing information on the 
behaviour of a `typical individual'. 
We will provide a general expression for this auxiliary process in Section \ref{sec:MTO}.

\section{Mean number of cells alive: General results} \label{sec_mean_numb_cells}

We denote by $\mathfrak{C}_t$ the number of cells alive at time $t$. \al{Recall that a cell can die either by natural death, or if its number of parasites reaches infinity in finite time.}
\cha{As a consequence, $\mathfrak{C}_t$ may be defined as follows:
$$ \mathfrak{C}_t:= \sum_{u\in V_t}\mathbf{1}_{\{X_t^u<\infty\}} .$$}
We give here results on the asymptotic behaviour of $\mathfrak{C}_t$.
For general parasites dynamics, we give sufficient conditions for the cell population to survive with positive probability (see Proposition \ref{CS_ext_ps} below). Moreover, for specific dynamics of the parasites population, we give the asymptotic order of magnitude of the mean number of cells alive in the population. As in \cite[Proposition 2.1]{palau2016asymptotic}, we exhibit three different regimes, depending on the parameters of both the cells and the parasites dynamics (see Proposition \ref{CNS_ext_ps} below).
It allows us to study the effects of parasites growth rate and diffusion parameter, and cells division and death rates. The next section will be devoted to the study of the effect of the partitioning kernel on the average number of cells alive in large time.
\\
 
Let us introduce 
a random variable $\Theta$ on $(0,1)$ with symmetric distribution $\kappa$ satisfying
\begin{equation}\label{ass_Theta}
\E\left[ |\ln\Theta| \right]=\int_0^1 |\ln \theta|\kappa(d\theta)<\infty,
\end{equation}
as well as the function 
\begin{equation}\label{defLaplexp}
\laplexp(\lambda):= \lambda (g-\sigma^2) + \lambda^2 \sigma^2 + 2 r \left(\E[ \Theta^\lambda]-1\right),
\end{equation}
for any $\lambda \in (\lambda^-, \infty)$, where 
\begin{equation*} \lambda^-:= \inf \{\lambda <0: \laplexp(\lambda)<\infty\}.
\end{equation*}
The function $\laplexp$ is the Laplace exponent of a Lévy process (see the proof of Proposition \ref{CS_ext_ps}), and is thus convex on $(\lambda^-, \infty)$.
Let \label{lambdamoinsm}
\begin{align}\label{eq:m}
\m:=\laplexp^\prime(0+)=
g-\sigma^2+ 2r\E\left[ \ln\Theta \right]
\end{align}
and \al{put} $\hat{\tau}=\argmin_{(\lambda^-, 0)}\laplexp(\lambda)$ which is well-defined if $\lambda^-<0<\m$ because $\laplexp'$ is an 
increasing function. We also define
\begin{align}\label{eq:kapparq}
\kapparq:=\laplexp(\hat{\tau})+r-q
= \hat{\tau} (g-\sigma^2)+ \hat{\tau}^2 \sigma^2 +  r \left[2\int_0^1 \theta^{\hat{\tau}} \kappa(d\theta) -1\right]-q.
\end{align}
We have the following sufficient condition for the mean number of cells to go to infinity.

\begin{prop} \label{CS_ext_ps}
Assume that the dynamics of the number of parasites in a cell follows \eqref{X_sans_sauts}, with $p(x) = x$, 
 and $\sigma(x)^2 = \mathfrak{s}^2(x) x + \sigma^2x^2$ for any $x \in \overline{\R}_+$ with $\sigma \in \R_+$.
 
Suppose that
\begin{itemize}
\item $q(x)\leq q < r$ and $g(x)\leq g x$ for any $x \in \overline{\R}_+$ with $g\in \R_+$.
\item the function $\mathfrak{s}$ is H\"older continuous with index $1/2$ on compact sets and there exists a finite \al{positive} constant $\mathfrak{c}$ such that
for $x \geq 0$, $\mathfrak{s}(x)\sqrt{x} \leq \mathfrak{c} \vee x^\mathfrak{c}$
\item for $x \geq 0$ the random variable $\Theta(x)$ is stochastically dominated by a random variable $\Theta$ satisfying \eqref{ass_Theta}.
\end{itemize}
Then, if $\m\leq 0$ or ($\m>0$ and $\kapparq>0$), for any $x> 0$
$$\lim_{t \to \infty} \E_{\delta_{x}}[\mathfrak{C}_t] = \infty. $$
\end{prop}

\al{\begin{rem}
In Proposition \ref{CS_ext_ps}, the Brownian coefficient of the dynamics of the parasites is $\sqrt{2\mathfrak{s}^2(x)x + 2\sigma^2x^2}$. This part of the dynamic can be decomposed into two different type of fluctuations: random fluctuations in the parasites growth (corresponding to the part $2\mathfrak{s}^2(x)x$) and the modeling of a random environment for the parasites (corresponding to $2\sigma^2x^2$).
\end{rem}}
Hence, for a large class of models, the cell population survives the infection with positive probability if the strategy for repartition of the parasites at division is well-chosen. The sign of $\m$ indicates if the number of parasites stays finite with a positive probability in a typical cell line. If it is the case ($\m\leq 0$), then the expected number of cells alive goes to infinity as time goes to infinity because the cell population grows exponentially at a rate larger than $r-q>0$. If $\m>0$, then the probability that the number of parasites is infinite in a typical cell line goes to $1$ as time goes to infinity. In that case, the speed of convergence of this probability has to be compared with the growth of the population. And if the growth of the population is strong enough ($\kapparq>0$), the expected number of cells that are alive still goes to infinity as time goes to infinity.

Focusing on the role of the partitioning kernel, Proposition \ref{CS_ext_ps} shows that if the cell population manages to adapt its partitioning strategy to make it more asymmetric, it can save the cell population (in the sense of making the mean number of cells alive tend to infinity for large time). Indeed, for any choice of the triplet $(g,\sigma,r)$, we can find a kernel $\kappa$ satisfying \eqref{ass_Theta} such that
$$ \E\left[ \ln\Theta \right]\leq \frac{\sigma^2-g}{2r},$$ 
and thus $\m\leq 0$.
\cha{It highlights that whatever the triplet $(g,\sigma,r)$, survival of the cell population with positive probability may be guaranteed by a kernel $\kappa$ which is sufficiently asymmetric.}

If we specify a bit more the dynamics of the number of parasites, we can give more precise results on the asymptotic behaviour of $\mathfrak{C}_t$. To that end, \al{instead of \eqref{X_sans_sauts}, we consider a simplified version of the SDE}:
\begin{equation} \label{X_sans_sauts2} \mathfrak{X}_t =x + g\int_0^t \mathfrak{X}_sds+ \int_0^t\sqrt{2\sigma^2\mathfrak{X}_s^2}dB_s
+\int_0^t\int_0^{\mathfrak{X}_{s^-}}\int_{\mathbb{R}_+}zR(ds,dx,dz),
\end{equation}
where $g \geq 0,\sigma \geq 0 $, $x \geq 0$, $B$ is a standard Brownian motion and the Poisson measure $R$ has been defined in \eqref{X_sans_sauts}.
In this case, we are able to obtain an equivalent of the mean number of cells alive at a large time $t$. It emphasizes how crucial is the choice of repartition of the parasites at division between daughter cells, \cha{as this later may be directly translated into the sign of $\m$, which discriminates between the different possible long term behaviours of the cell population.}

\begin{prop}\label{CNS_ext_ps}
Assume that the dynamics of the number of parasites in a cell follows \eqref{X_sans_sauts2}, that $\Theta(x) \overset{\mathcal{L}}{=}\Theta$ and satisfies \eqref{ass_Theta}, and that $q(x)\equiv q\geq 0$ with $q \neq r$.
\begin{enumerate}[label=\roman*), ref = {\it \roman*)}]
\item \label{it:1-22} If $\m<0$, then
for every $x>0$ there exists $0<c_1(x)<1$ such that
\begin{equation*}
\underset{t\rightarrow\infty}{\lim}e^{(q-r)t}\E_{\delta_{x}}\left[\mathfrak{C}_t\right]=c_1(x). 
\end{equation*}
\item \label{it:2-22} If $\m=0$ and $\lambda^-<0$,
then
for every $x>0$ there exists $c_2(x)>0$ such that
\begin{equation*}
\underset{t\rightarrow\infty}{\lim}\sqrt{t}e^{(q-r)t}\E_{\delta_{x}}\left[\mathfrak{C}_t\right]=c_2(x).  
\end{equation*}
\item \label{it:3-22} If $\m>0$, then
for every $x>0$ there exists $c_3(x)>0$ such that
\begin{equation*}
\underset{t\rightarrow\infty}{\lim}t^{\frac{3}{2}} e^{-\kapparq t} \E_{\delta_{x}}\left[\mathfrak{C}_t\right]=c_3(x). 
\end{equation*}
\end{enumerate}
\end{prop}

Note that the dependency on $\mathfrak{b}$ (parameter of the law of positive jumps for the parasites) is hidden in the limiting functions $c_1,c_2,c_3$.  We refer the reader to the proof for details.

In absence of parasites, if $r>q$, the cell population evolves as a supercritical Galton-Watson process and survives with probability $1-q/r$ \cite{athreya1972branching}. In the presence of parasites, the condition $r>q$ does not ensure that the cell population survives \al{with positive probability}, as it goes extinct almost surely if $\mathfrak m>0$ and $\kapparq\leq 0$. More generally, from the previous proposition, we deduce the following corollary on the asymptotic behaviour of $\mathfrak{C}_t$.

\begin{cor}\label{Cor_CNS_ext_ps}
Under the assumptions of Proposition \ref{CNS_ext_ps}, for any $x> 0$,
\begin{enumerate}[label=\roman*), ref = {\it \roman*)}]
\item \label{it:1-23} If $q\geq r$ or if $(\m>0 \text{ and }\kapparq \leq 0), $ then
$\lim_{t \to \infty} \E_{\delta_{x}}[\mathfrak{C}_t]=0$.
\item \label{it:2-23} If $(\m\leq 0\text{ and }r>q)$ or if $(\m>0 \text{ and }\kapparq >0),$
then $ \lim_{t \to \infty} \E_{\delta_{x}}[\mathfrak{C}_t]=\infty$.
\end{enumerate}
\end{cor}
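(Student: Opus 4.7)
The plan is to read the corollary off directly from the three regimes of Proposition \ref{CNS_ext_ps} by case-analysing the sign of $\m$, and then to supplement the subcase ``$\m>0$ together with $q\geq r$'' by a short convexity argument showing that $\kapparq<0$ holds automatically there.

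First, in the regime $\m<0$, Proposition \ref{CNS_ext_ps}\ref{it:1-22} gives $\E_{\delta_x}[\mathfrak{C}_t]\sim c_1(x)e^{(r-q)t}$; recalling that the standing assumption $q\neq r$ forces ``$q\geq r$'' to mean $q>r$, this tends to $0$ if $q>r$ and to $\infty$ if $r>q$. Second, in the regime $\m=0$ (for which Proposition \ref{CNS_ext_ps}\ref{it:2-22} additionally presupposes $\lambda^-<0$, which I will assume throughout that case), one has $\E_{\delta_x}[\mathfrak{C}_t]\sim c_2(x)t^{-1/2}e^{(r-q)t}$, whose limit is again $0$ when $q>r$ and $\infty$ when $r>q$. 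Third, in the regime $\m>0$, Proposition \ref{CNS_ext_ps}\ref{it:3-22} yields $\E_{\delta_x}[\mathfrak{C}_t]\sim c_3(x)t^{-3/2}e^{\kapparq t}$, which tends to $0$ as soon as $\kapparq\leq 0$ and to $\infty$ as soon as $\kapparq>0$. Plugging these three asymptotics into each of the disjunctive hypotheses in (i) and (ii) gives every case of the corollary except one.

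The remaining point is the subcase of (i) where $q\geq r$ and $\m>0$ and no explicit assumption on $\kapparq$ is made; I claim it still lies in the extinction regime $\kapparq<0$. Indeed, $\laplexp$ is convex on $(\lambda^-,\infty)$ with $\laplexp(0)=0$ and $\laplexp'(0+)=\m>0$, so its minimiser on $(\lambda^-,0)$ is attained at some $\hat\tau$ with $\hat\tau<0$ strictly, whence $\laplexp(\hat\tau)<\laplexp(0)=0$. Combined with $r-q\leq 0$ this yields
\[
\kapparq=\laplexp(\hat\tau)+r-q<0,
\]
so the third regime delivers $\E_{\delta_x}[\mathfrak{C}_t]\to 0$ as required.

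I do not anticipate any real obstacle: the corollary is essentially a bookkeeping consequence of the precise asymptotics of Proposition \ref{CNS_ext_ps}, the only non-routine ingredient being the convexity observation that $\m>0$ together with $q>r$ automatically places one in the subcritical regime $\kapparq<0$.
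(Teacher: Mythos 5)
Your proof is correct and follows exactly the route the paper intends: the corollary is presented as a direct consequence of Proposition \ref{CNS_ext_ps}, and your case analysis, together with the convexity observation that $\m>0$ and $q>r$ force $\kapparq=\laplexp(\hat\tau)+r-q<0$, supplies the only subcase the paper leaves implicit. The caveat you rightly flag --- that the regime $\m=0$ needs $\lambda^-<0$ for Proposition \ref{CNS_ext_ps}\ref{it:2-22} to apply, while the corollary states no such restriction --- is an imprecision inherited from the paper itself, and for $q>r$ it is harmless anyway since $\E_{\delta_x}[\mathfrak{C}_t]\leq \E_{\delta_x}[N_t]=e^{(r-q)t}\to 0$.
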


Notice that the case $r=q$ is not taken into account in Proposition \ref{CNS_ext_ps} for the simplicity of its statement as it corresponds to a critical birth and death process for the cell population dynamics. However, we know that in this case the number of cells (with a finite or infinite number of parasites) reaches $0$ in finite time, hence the number of cells alive also reaches $0$ in finite time.\\

We now study the asymptotic qualitative behaviour ($0$ or $\infty)$ of $\E_{\delta_{x}}[\mathfrak{C}_t]$ as a function of $g$, $r$, $q$ and $\sigma$.
The dependence on $\kappa$ will be the subject of Section \ref{sec:role-partitioning} and will thus not be indicated here.
For the sake of readability and only for Lemma \ref{lem_dep}, we denote by 
$$ \E_{\delta_{x}}^{(g,\sigma,r,q)}[\mathfrak{C}_t] $$
the mean number of cells alive at time $t$ when there is initially one cell with a number $x$ of parasites, and that the dynamics of the infected cell population follows the assumptions of Proposition \ref{CNS_ext_ps} with parameters $(g,\sigma,r,q)$. We also introduce its limit when $t$ goes to infinity via
$$ \mathfrak{A}_x(g,\sigma,r,q):=\lim_{t \to \infty}\E_{\delta_{x}}^{(g,\sigma,r,q)}[\mathfrak{C}_t] $$
Then, we prove that for each parameter of the model, fixing all the other parameters, there exists a limiting value corresponding to a change of asymptotic behaviour of $\mathfrak{A}_x(g,\sigma,r,q)$.
\begin{lemma} \label{lem_dep}
Under the assumptions of Proposition \ref{CNS_ext_ps}, for any $x> 0$,
\begin{enumerate}[label=\roman*), ref = {\it \roman*)}]
\item There exists $q_{\lim}(g,\sigma,r) \in \R_+$ such that
\begin{align*}
q \geq q_{\lim}(g,\sigma,r) \Rightarrow \mathfrak{A}_x(g,\sigma,r,q) &= 0 \text{ and }
q < q_{\lim}(g,\sigma,r) \Rightarrow \mathfrak{A}_x(g,\sigma,r,q) = \infty.
\end{align*}
\item There exists $r_{\lim}(g,\sigma,r,q) \in \R_+$ such that
\begin{align*}
r \leq r_{\lim}(g,\sigma,q) \Rightarrow \mathfrak{A}_x(g,\sigma,r,q) &= 0 \text{ and }
r > r_{\lim}(g,\sigma,q) \Rightarrow \mathfrak{A}_x(g,\sigma,r,q) = \infty.
\end{align*}
\item There exists $g_{\lim}(\sigma,r,q) \in \R_+$ such that
\begin{align*}
g \geq g_{\lim}(\sigma,r,q) \Rightarrow \mathfrak{A}_x(g,\sigma,r,q) = 0  \text{ and }
g < g_{\lim}(\sigma,r,q) \Rightarrow \mathfrak{A}_x(g,\sigma,r,q) = \infty.
\end{align*}
\item There exists $\sigma_{\lim}(g,r,q) \in \bar{\R}_+$ such that
\begin{align*}
\sigma < \sigma_{\lim}(g,r,q) \Rightarrow \mathfrak{A}_x(g,\sigma,r,q) &= 0 \text{ and }
\sigma > \sigma_{\lim}(g,r,q) \Rightarrow \mathfrak{A}_x(g,\sigma,r,q) = \infty.
\end{align*}
Moreover,
$
\mathfrak{A}_x(g,\sigma_{\lim}(g,r,q),r,q) =\left\lbrace
\begin{array}{ll}
\infty, & \text{if }\ \sigma_{\lim}(g,r,q)=0,\\
0, & \text{otherwise.}
\end{array} \right. 
$
\end{enumerate}
\end{lemma}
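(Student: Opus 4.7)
Our strategy is to exploit Corollary~\ref{Cor_CNS_ext_ps}: it establishes the $\{0,\infty\}$-dichotomy for $\mathfrak{A}_x$ and identifies the survival region in parameter space as
\[
\mathcal{S} \;=\; \bigl\{\m \leq 0,\ r > q\bigr\} \;\cup\; \bigl\{\m > 0,\ \kapparq > 0\bigr\}.
\]
The lemma therefore reduces to showing that, for each of the four parameters with the other three fixed, the intersection of $\mathcal{S}$ with the corresponding coordinate ray is a half-line of $\overline{\R}_+$, and to identifying its endpoint. I will prove monotonicity of $\m$ and $\kapparq$ in each of the four coordinates, and then read off the threshold from the two clauses of $\mathcal{S}$.

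The dependence of $\m = g-\sigma^2 + 2r\,\E[\ln\Theta]$ on the parameters is immediate from the formula, using $\E[\ln\Theta]<0$: it is non-decreasing in $g$, non-increasing in $\sigma$ and $r$, and constant in $q$. For $\kapparq = \laplexp(\hat\tau)+r-q$, with $\hat\tau \in (\lambda^-,0)$ the interior minimizer of the convex function $\laplexp$ (defined on the open region $\{\m>0,\ \lambda^-<0\}$ by $\laplexp'(\hat\tau)=0$), the envelope theorem yields
\begin{align*}
\partial_g \kapparq = \hat\tau < 0, \qquad &\partial_\sigma \kapparq = 2\sigma\,\hat\tau(\hat\tau-1) > 0, \\
\partial_r \kapparq = 2\,\E[\Theta^{\hat\tau}] - 1 > 0, \qquad &\partial_q \kapparq = -1,
\end{align*}
the signs following from $\hat\tau<0$ and the consequent $\E[\Theta^{\hat\tau}]>1$.

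Combining these monotonicities with that of $r-q$ in $r$ and $q$, each coordinate slice of $\mathcal{S}$ is monotone and hence a half-line, which gives the thresholds. Finiteness is verified through the limits at infinity: $\m \to -\infty$ as $r\to\infty$ combined with $r>q$ for large $r$ secures $r_{\lim}<\infty$; $\laplexp(\hat\tau)\to-\infty$ as $g\to\infty$ (since $\laplexp(\lambda)\sim \lambda g$ at fixed $\lambda<0$) secures $g_{\lim}<\infty$; and $q_{\lim}\leq r$ always, because on the two clauses of $\mathcal{S}$ one gets $q_{\lim}=r$ (if $\m\leq 0$) or $q_{\lim}=r+\laplexp(\hat\tau)<r$ (if $\m>0$). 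The boundary assertion of (iv) is read off directly: $\sigma_{\lim}=0$ requires $\sigma=0$ to lie in $\mathcal{S}$, hence $\mathfrak{A}_x=\infty$ at the threshold; whereas $\sigma_{\lim}\in(0,\infty)$ occurs via a transition $\kapparq=0$ with $\m>0$ (the case $r\leq q$ forces $\sigma_{\lim}=+\infty$), and case~(iii) of Proposition~\ref{CNS_ext_ps} then gives $\mathfrak{A}_x=0$ at $\sigma=\sigma_{\lim}$.

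The main obstacle is the continuous matching of the two clauses of $\mathcal{S}$ across the hyperplane $\{\m=0\}$: without continuity, a coordinate slice of $\mathcal{S}$ could in principle be a union of two disjoint intervals rather than a single half-line. As $\m\downarrow 0$, the implicit-function theorem applied to $\laplexp'(\hat\tau)=0$ shows $\hat\tau\uparrow 0$ and consequently $\laplexp(\hat\tau)\uparrow 0$, so that $\kapparq\to r-q$. Thus the $\{\m>0,\kapparq>0\}$ branch matches the $\{\m\leq 0,r>q\}$ branch along $\{\m=0\}$, which is exactly what guarantees the monotone half-line structure. Once this gluing is secured, the four derivative signs above yield the claimed thresholds by inspection.
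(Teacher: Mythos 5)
Your proposal is correct and follows essentially the same route as the paper: reduce via Corollary \ref{Cor_CNS_ext_ps} to a sign analysis of $\m$ and $\kapparq$, establish monotonicity of $\kapparq$ in each parameter through $\hat\tau$, and check the limiting values at the boundary $\m=0$ and at infinity. The only differences are cosmetic streamlinings — you apply the envelope theorem uniformly to get all four derivatives of $\kapparq$ (the paper does this only for $\sigma$ and argues the $g$- and $r$-cases by hand), and you obtain $\laplexp(\hat\tau)\to-\infty$ as $g\to\infty$ by comparison with a fixed $\lambda_0<0$ rather than the paper's concavity bound; just note that $q_{\lim}=r+\laplexp(\hat\tau)$ must be truncated at $0$ when negative to land in $\R_+$.
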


The effect of $\kappa$, which describes the sharing of the parasites at division, is less intuitive and explicit computations are not always feasible. We nevertheless are able to study some particular cases and to compare some classes of partitioning kernels $\kappa$.

\section{Mean number of cells alive: Role of the partitioning kernel}\label{sec:role-partitioning}

\cha{In this section we further investigate the number of cells alive in large time, focusing on the effect of parasite sharing between daughter cells. We consider stronger assumptions than in the previous section, in order to obtain explicit expressions. \al{We assume that $\kappa(x,\cdot)\equiv\kappa(\cdot)$, {\it i.e.} that the partitioning is independent of the number of parasites.} The law of parasite sharing is then given by the random variable $\Theta$ whose law satisfies \eqref{ass_Theta}, and the number of parasites in a cell follows the SDE \eqref{X_sans_sauts2} with $\sigma=0$, that is
$$ \mathfrak{X}_t =x + g\int_0^t \mathfrak{X}_sds+ \int_0^t\int_0^{\mathfrak{X}_{s^-}}\int_{\mathbb{R}_+}zR(ds,dx,dz). $$}

\subsection{Deterministic vs random partitioning: a numerical study}
Focusing on two families of measures for the partitioning of the parasites at division, we first explore via numerical computations the \al{performance} of the different strategies in terms of survival of the cell population. Note that those families have also been considered in \cite{genthon2022} to study the effect of the partitioning kernels on the cell size distributions.

The first family (also considered in \cite{BPS}) corresponds to deterministic partitioning, with associated measures on $[0,1]$ denoted by $\kappa_z$, and given for all $z\in[0, 1/2]$ by
\begin{align*}
\kappa_z(d\theta) = \frac{1}{2}(\delta_z(d\theta) + \delta_{1-z}(d\theta)).
\end{align*}
The bigger $z$ is, the more the partitioning is asymmetric: $z=1/2$ corresponds to symmetric partitioning and $z=0$ to the case of all parasites going to one daughter cell.  For this family,
$ \m_{z}=g +   r\ln( z(1-z) ),$
where $\m$ is defined in \eqref{eq:m} and the subscript $z$ refers to the partitioning kernel $\kappa_{z}$.

The second family of measures that we consider corresponds to random partitioning. The associated partitioning measures on $[0,1]$, denoted by $\kappa_{\alpha}$, are given for all $\alpha>-1$ by
\begin{align}\label{eq:random-kernel}
\kappa_\alpha(d\theta) = c_\alpha\theta^\alpha(1-\theta)^\alpha d\theta,
\end{align}
where $c_\alpha = \Gamma(2\alpha + 2)/\Gamma(\alpha + 1)^2$, and $\Gamma$ is the Gamma function. In Figure \ref{fig:kernels}, the shapes of the kernels for various values of $\alpha$ are represented. The bigger $\alpha$ is, the more the partitioning is asymmetric. Note that $\alpha =0$ corresponds to the uniform sharing $\kappa(d\theta)=d\theta$, and will be studied in details below. 
\begin{figure}
\includegraphics[scale=0.25]{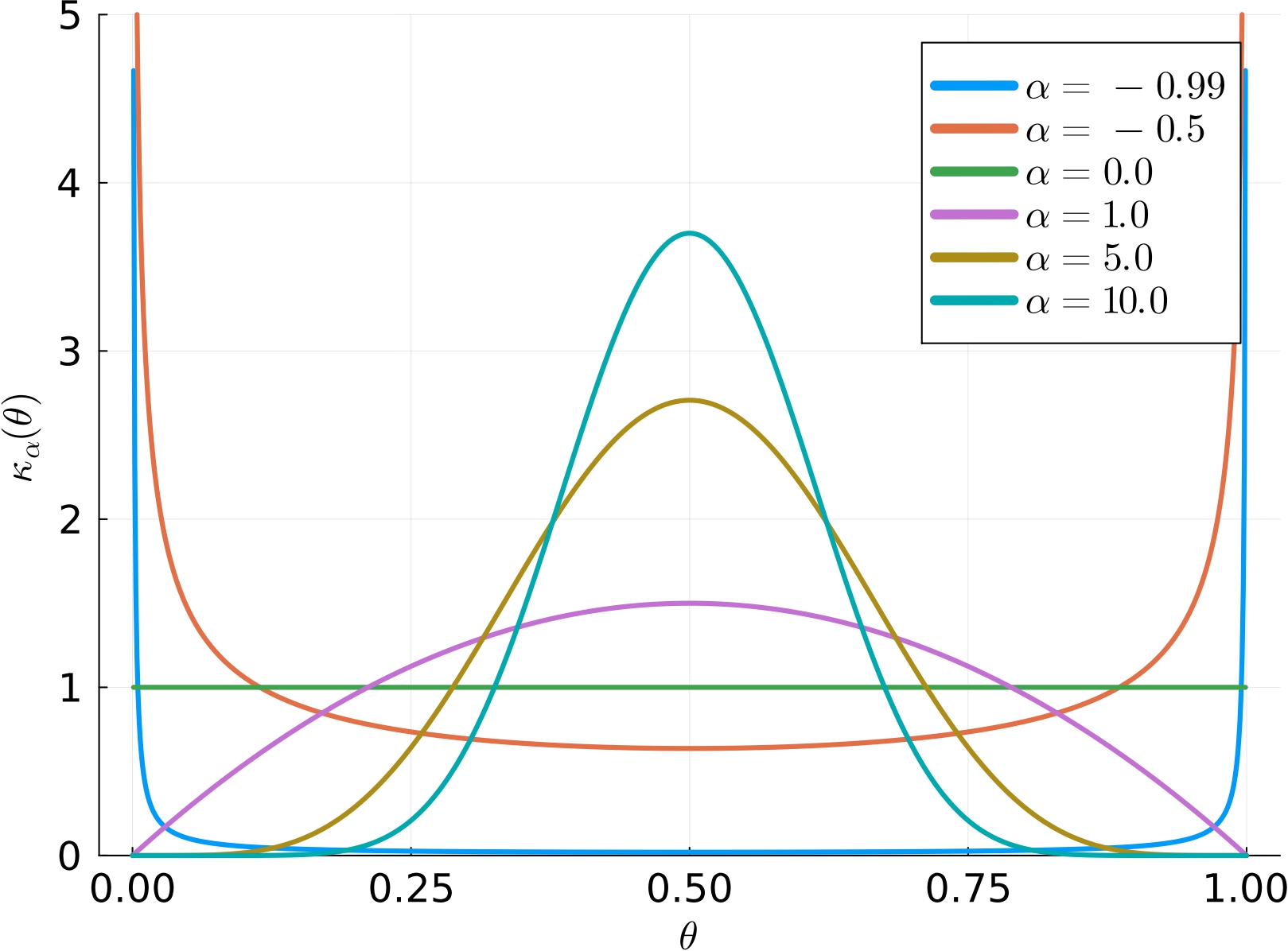}
\caption{The density functions of the partitioning kernels $\kappa_\alpha$ for various values of $\alpha$}
\label{fig:kernels}
\end{figure}
For any $\alpha>-1$, $\m_\alpha =g + 2r(\psi_0(\alpha) - \psi_0(2\alpha+1)),$
where $\psi_0$ is the digamma function and the subscript $\alpha$ refers to the partitioning kernel $\kappa_\alpha$.

We will now compare the two partitioning strategies, deterministic or random, represented by the two families of partitioning kernels $(\kappa_z,\ z\in~(0,1/2])$ and $(\kappa_\alpha,\ \alpha>-1)$. We denote by $\Theta_z$ and $\Theta_\alpha$ the random variables with distribution $\kappa_z$ and $\kappa_\alpha$ respectively.

First, we define $\mintheta_z :=\E\left[\min(\Theta_z,1-\Theta_z)\right]$ and $\mintheta_\alpha :=\E\left[\min(\Theta_\alpha,1-\Theta_\alpha)\right]$. This quantity corresponds to the expectation of the minimal fraction of parasites inherited by one of the daughter cells.
To each random partitioning $\kappa_\alpha$, we associate the deterministic partitioning kernel $\kappa_z$ such that $\mintheta_z=\mintheta_\alpha$.
Notice that the choice of $z$, for a given $\mintheta_\alpha$ is unique, as $\mintheta_z=z$. As a consequence, we
denote by $(\alpha, z_\alpha)=(\alpha, \mintheta_\alpha)$ such couples of parameters. Simple computations give
\begin{align*}
\mintheta_\alpha = 2c_\alpha\mathrm{B}(1/2; \alpha + 2,\alpha + 1), 
\end{align*}
where $\mathrm{B}(x;a, b):= \int_0^{x} \theta^{a-1}(1-\theta)^{b-1} d\theta $ is the incomplete Beta function ($x\in [0,1], a,b>-1$). Then,
\begin{align*}
z_\alpha = 2c_{\alpha} \mathrm{B}(1/2; \alpha + 2,\alpha + 1).
\end{align*}
Using this correspondence between the two families of kernels, we compare the two partitioning strategies (random or deterministic) for various values of $\mintheta$. In Figure \ref{fig:q0}, we show the correspondence between the long time behaviour of the mean cell population size and the values of $(g/r,\alpha)$ for $\kappa_{\alpha}$, and the values of $(g/r,z_\alpha)$ for $\kappa_{z_\alpha}$. We fixed $q=0$, but we get similar behaviours for all values of $q<r$.  
Interestingly, the two families of partitioning kernels exhibit the same qualitative behaviour
in terms of proliferation of the infection.
We observe that strategies with larger variances (small values of $z$ and $\alpha$) are more efficient in terms of survival of the cell population. For a given strength of proliferation of the infection $g$ relative to the growth of the population $r$, the fate of the cell population depends on the value of $\alpha$ or $z$: parameters
\begin{itemize}
\item in the green area lead to survival of the cell population, for both partitioning kernels,
\item in the red area lead to extinction of the cell population for both partitioning kernels,
\item in the orange area lead to extinction of the cell population for deterministic partitioning strategies, and survival of the cell population for random partitioning strategies.
\end{itemize}
Therefore, for any infection level and a given $\mintheta$, the random partitioning strategy is always better in terms of asymptotic mean number of cells alive in the population than deterministic partitioning ($\kapparq_{z_{\alpha}}<\kapparq_{\alpha}$). Moreover, there exist parameters $g,r$ ({\it e.g.} $\log(g/r)=4$ in Figure~\ref{fig:q0}) such that a population with deterministic partitioning gets extinct (in the sense of asymptotic mean number of cells alive) whatever the value of $z$, whereas a population with random partitioning can survive, if the division is sufficiently asymmetric.   
\begin{figure}
\includegraphics[scale=0.6]{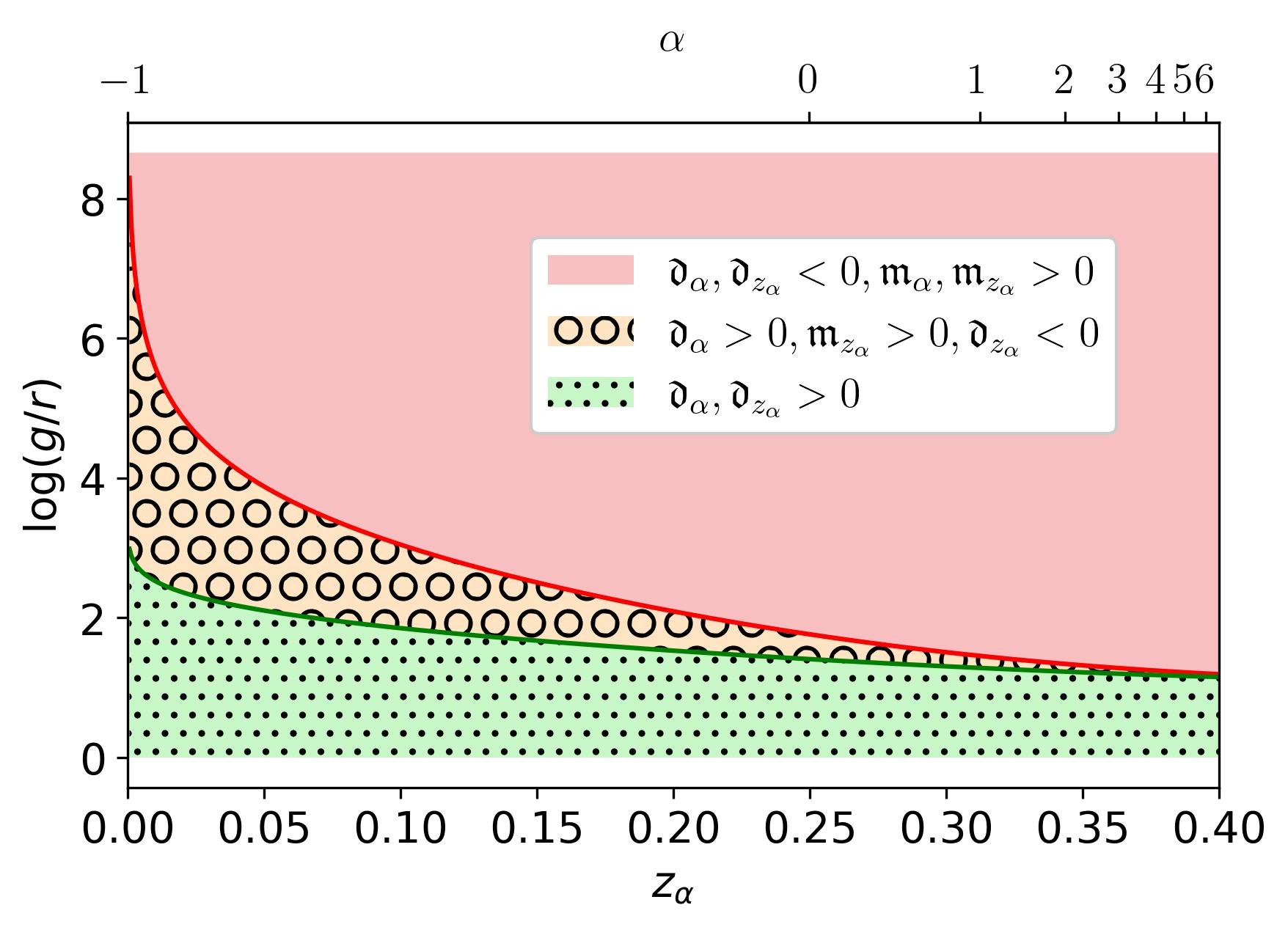}
\caption{Classification of the couples of parameters $(g/r, z_\alpha)$ and  $(g/r, \alpha)$ in the case of deterministic or random partitioning with $q=0$. \al{Parameters in the red empty and green dotted areas lead to survival or extinction of the cell population, respectively, for both deterministic and random partitioning kernels; parameters in the orange area with circles lead to extinction of the cell population for deterministic partitioning strategies, and to survival of the cell population for random strategies. For each $z_\alpha$ (the mean relative size of the smaller fragment), the corresponding value of $\alpha$ is given on the top horizontal axis}.}

\label{fig:q0}
\end{figure}
Note that to simplify the figure, we did no plot the curves $\m_{z_\alpha}=0$ and $m_{\alpha}=0$, but they behave similarly to the curves $\kapparq_{z_\alpha}=0$ and $\kapparq_{\alpha}=0$.

Finally, for high levels of the proliferation, neither the random nor the deterministic strategies considered here can overcome the infection, or with an extreme asymmetric distribution ($\mintheta \approx 0$). In Proposition~\ref{prop_bigg} below, we prove that for any value of $\vartheta$, there exists a partitioning strategy ensuring the survival of the population.

\subsection{Analytic comparison of partitioning strategies} \label{sec_analytic}
The most simple examples of partitioning strategies are the uniform law and the symmetric sharing, belonging respectively to the family of random and deterministic partitioning studied above. For those laws, we can explicit the bounds of Corollary \ref{Cor_CNS_ext_ps}.

\begin{cor}\label{Cor_CNS_ext_ps_unif}
Assume that the number of parasites in a cell follows the SDE \eqref{X_sans_sauts2} with $\sigma=0$, that $ r>q(x)\equiv q\geq 0$. Recall the definition of $g_{\lim}(\sigma,r,q)$ in Lemma \ref{lem_dep}.
\begin{itemize}
\item[-]If $\kappa(d\theta)=d\theta$, $$ g_{\lim}(0,r,q) =  3r-q+2\sqrt{2r(r-q)}. $$
\item[-]If $\kappa(d\theta)=\delta_{1/2}(d\theta)$, $$ g_{\lim}(0,r,q) =  r x_0(q/r)\ln 2 $$
where $x_0(q/r)>2$ is the unique value such that
\begin{align}
\label{eq:x0rq}
x_0(q/r) =  \left(1+\frac{q}{r}\right)(1+\ln 2- \ln \left( x_0(q/r)  \right))^{-1}.
\end{align}
\end{itemize}
\end{cor}

From this result, one can prove with a few more computations that the `uniform sharing' strategy is always better than the `equal sharing' strategy in terms of survival of the cell population. In fact, the symmetric sharing is the worst strategy, as stated in the next proposition. 
\begin{prop}\label{prop:inf}
Assume that the number of parasites in a cell follows the SDE \eqref{X_sans_sauts2} with $\sigma=0$, that $ r>q(x)\equiv q\geq 0$. For any partitioning kernel $\kappa$, if $g/r<x_0(q/r)\ln 2$, where $x_0(q/r)$ is defined in \eqref{eq:x0rq}, we have for all $x\geq 0$, 
\begin{align*}
\lim_{t\rightarrow+\infty}\E_{\delta_x}[\mathfrak{C}_t]=\infty. 
\end{align*}
\end{prop}
As $x_0(q/r)\ln 2$ is the limiting value corresponding to the case of an equal sharing, Proposition \ref{prop:inf} proves that any other sharing strategy is better than the symmetric partitioning.

More generally, we expect that a more unequal strategy is beneficial for the cell population: it amounts to `sacrificing' some lineages in order to 
save the other ones. We 
were not able to prove such a general statement, but we
will try to understand better the effect of unequal sharing in the next two propositions. Recall that $\mintheta=\E\left[\min(\Theta, 1-\Theta)\right]$. First, as explained above and in Figure \ref{fig:q0}, for a fixed value of $\mintheta$, random partitioning is always better than deterministic partitioning in terms of survival of the population. For a fixed value of $\mintheta$, is the deterministic partitioning the worst strategy in general? Second, does there exist, for any level of infection and for a fixed value of $\mintheta$, a partitioning distribution that leads to survival of the cell population?

 To approach these questions, we first consider finite points partitioning distributions for illustrative purposes. Let $n\geq 1$ be the number of possible modes $z_1, \ldots,z_n\in (0, 1/2]$, which are independent and identically distributed according to a uniform law on $(0,1/2]$. Then, the associated partitioning distribution has $2n$ modes, $z_1, \ldots,z_n, 1-z_1,\ldots,1-z_n$. Next, we define $p_i:=\P(\Theta=z_i)$ and $\mathbf{p}=(p_1,\ldots,p_n)$.
For those multimodal distributions, we have
\begin{align*}
\mintheta= \E\left[\min(\Theta, 1-\Theta)\right]= 2\sum_{i = 1}^nz_ip_i. 
\end{align*}
\begin{figure}
\includegraphics[scale=0.35]{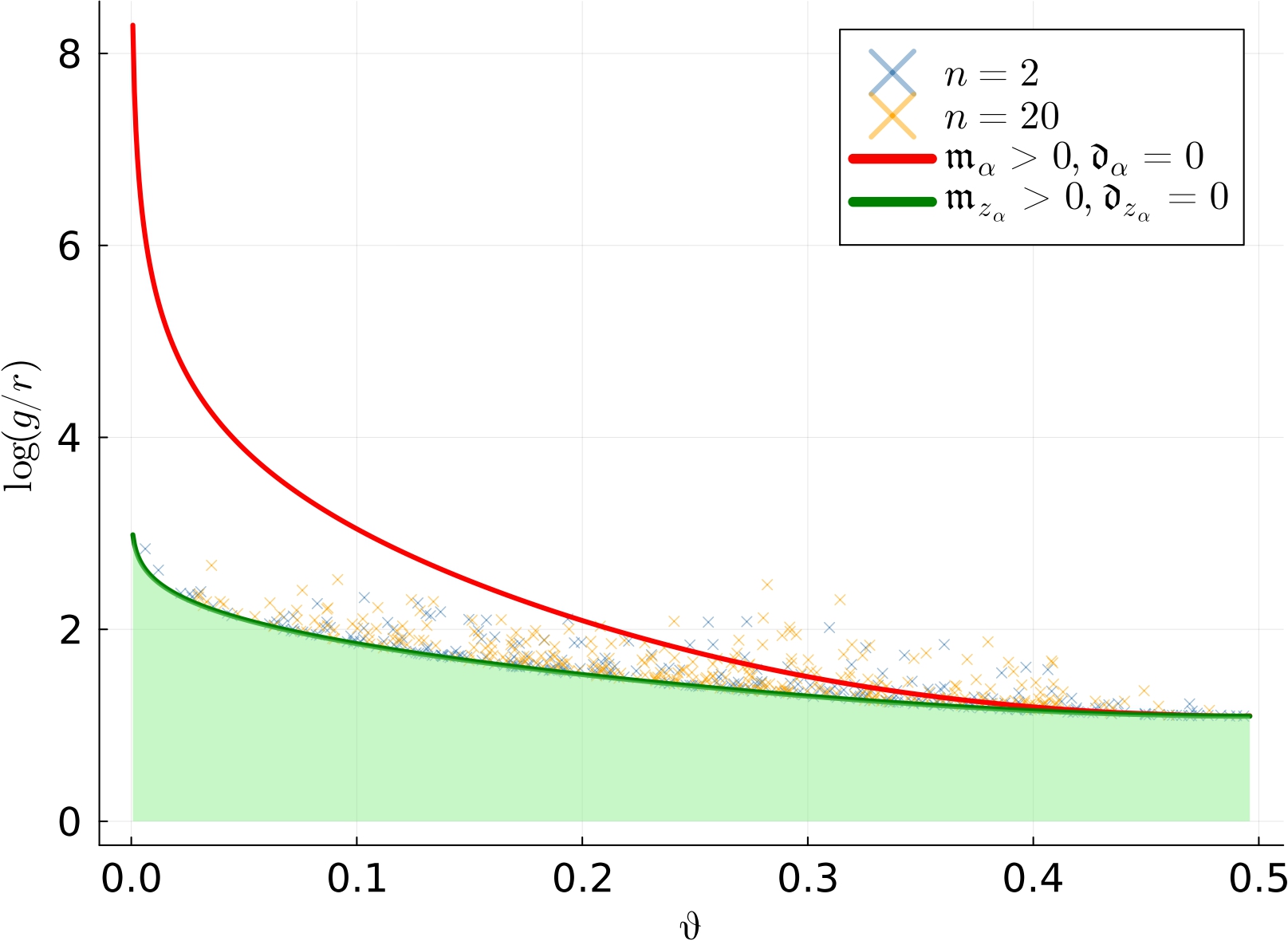}
\caption{Classification of the couples of parameters $(g/r, z_\alpha)$ and  $(g/r, \alpha)$ in the case of finite point partitioning distributions with $q=0$. Parameters in the green area lead to survival of the cell population for any finite point partitioning kernel. Each cross corresponds to the limit for a finite point distribution with a given value of $\vartheta$ above which the cell population go to extinction. The orange crosses (resp. blue) correspond to distributions with $20$ (resp. $2$) modes below $1/2$. The red curve corresponds to the limit above which a cell population with a random partitioning strategy (see \eqref{eq:random-kernel}) goes to extinction.}
\label{fig:multi}
\end{figure}
In Figure \ref{fig:multi}, we plot the logarithm of the limiting value $g/r$ at which $\kapparq =0$ for various multimodal distributions as a function of $\mintheta$.
 The green and red curves represent the limiting value for the kernels $\kappa_z$ and $\kappa_\alpha$ respectively, studied in the previous section. We observe that for a fixed value of $\mintheta$, the worst scenario seems to be the case of a deterministic partitioning $\kappa_z$. We will prove this result analytically for any symmetric distribution on $[0,1]$ . 

\begin{prop}\label{prop:multimodal}
Assume that the number of parasites in a cell follows the SDE \eqref{X_sans_sauts2} with $\sigma=0$, and that $ r>q(x)\equiv q\geq 0$. Let $\vartheta \in (0,1/2]$ and let $\kappa$ be a symmetric distribution on $[0,1]$  such that
$$\int_0^1\min (\theta , (1-\theta))\kappa(d\theta)= \vartheta.$$
Finally, let
$$ \kappa_{\vartheta}(d\theta)=1/2\left(\delta_{\vartheta}+\delta_{1-\vartheta}\right),$$
be the associated deterministic partitioning kernel.
Then, for any $x>0$,
$$ \lim_{t\rightarrow+\infty}\E_{\delta_x}^{(\kappa)}[\mathfrak{C}_t]\geq \lim_{t\rightarrow+\infty}\E_{\delta_x}^{(\kappa_{\vartheta})}[\mathfrak{C}_t], $$
where $\E^{(\kappa)}$ (resp. $\E^{(\kappa_\vartheta)}$) denotes the expectation for the population process with partitioning kernel $\kappa$ (resp. $\kappa_\vartheta$).
\end{prop}

 On the other hand, there is no upper bound: for any value of $\mintheta\in (0,1/2)$, and any value of $y\geq 0$, one can find a finite point measure (with $n=2$ for example) such that for all $g/r<y$, the mean number of cells alive goes to infinity when time goes to infinity. This can be achieved by taking very small values for $z_1$, which is the smallest atom of the partitioning distribution. 
This is formally stated in the following proposition.
\begin{prop} \label{prop_bigg}
Let $g,r,q \in \R_+$ with $q<r$ and $\mintheta\in (0,1/2)$. Then,
there exists a multimodal distribution 
$$\kappa_2(d\theta)=\sum_{i=1}^2\left(\delta_{z_i}(d\theta)+\delta_{1-z_i}(d\theta)\right)p_i,$$
with $2(p_1+p_2)=1$, and $(z_1,z_2) \in  (0,1/2)^2 $, such that if $\Theta \sim \kappa_2 $,
$$  \E\left[\min(\Theta, 1-\Theta)\right]=\mintheta \quad \text{and}\quad \lim_{t \to \infty} \E_{\delta_{x}}[\mathfrak{C}_t] = \infty \quad \text{for any} \quad x> 0. $$
\end{prop}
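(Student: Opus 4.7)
The plan is to apply Corollary~\ref{Cor_CNS_ext_ps}\ref{it:2-23} in the regime $\m\leq 0$ (with $r>q$ already given). Because $\sigma=0$, the definition \eqref{eq:m} reduces to $\m = g + 2r\,\E[\ln\Theta]$, and since $g$, $r$ and $\mintheta$ are fixed, it is enough to exhibit a symmetric partitioning $\kappa_2$ of the prescribed form which satisfies $\E[\min(\Theta,1-\Theta)]=\mintheta$ and whose $\E[\ln\Theta]$ can be pushed as far into $-\infty$ as I want.

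The construction is as follows. With $2(p_1+p_2)=1$ and $(z_1,z_2)\in(0,1/2)^2$, the symmetry of $\kappa_2$ gives $\E[\min(\Theta,1-\Theta)]=2(p_1z_1+p_2z_2)$, so the constraint becomes $p_1z_1+p_2z_2 = \mintheta/2$. Since $\mintheta<1/2$, the interval $(\mintheta,1/2)$ is nonempty; I fix once and for all some $p_2\in(\mintheta,1/2)$ and set $p_1 := 1/2-p_2\in(0,1/2-\mintheta)$. Treating $z_1\in(0,\mintheta/(2p_1))$ as a free parameter, I define $z_2(z_1) := (\mintheta/2 - p_1z_1)/p_2$, which enforces the expectation constraint by construction. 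A short verification (using $p_2>\mintheta$) shows $z_2(z_1)\to \mintheta/(2p_2)\in(0,1/2)$ as $z_1\to 0^+$, so the pair $(z_1,z_2(z_1))$ really lies in $(0,1/2)^2$ for all sufficiently small $z_1$.

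To conclude I let $z_1\to 0^+$ and track $\m$. Writing
\[
\E[\ln\Theta] = p_1\bigl(\ln z_1 + \ln(1-z_1)\bigr) + p_2\bigl(\ln z_2(z_1) + \ln(1-z_2(z_1))\bigr),
\]
the second summand stays bounded because $z_2(z_1)$ converges to a point of $(0,1/2)$, while the first is dominated by $p_1\ln z_1\to -\infty$. Hence $\m\to-\infty$, and for any $z_1$ small enough $\m\leq 0$. The resulting $\Theta$ takes finitely many values bounded away from $0$ and $1$, so assumption \eqref{ass_Theta} is automatic, and Corollary~\ref{Cor_CNS_ext_ps}\ref{it:2-23} yields $\lim_{t\to\infty}\E_{\delta_x}[\mathfrak{C}_t]=\infty$. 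There is essentially no hard step: the only mildly delicate point is the non-emptiness of the admissible parameter set, which is exactly what the strict inequality $\mintheta<1/2$ provides by making $(\mintheta,1/2)$ non-empty when picking $p_2$.
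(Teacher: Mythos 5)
Your proof is correct, but it takes a genuinely different and in fact more elementary route than the paper's. You fix the weights $p_1=1/2-p_2>0$ once and for all and let only $z_1\to0^+$, so that $\E[\ln\Theta]\leq p_1\ln z_1\to-\infty$ and hence $\m\to-\infty$; survival then follows from the first alternative of Corollary~\ref{Cor_CNS_ext_ps}\ref{it:2-23} ($\m\leq0$ and $r>q$), with no case distinction needed. The paper instead splits according to whether $g\leq -r\ln(\vartheta(1-\vartheta))$ or not, and in the high-infection regime chooses the weight of the extreme atoms to vanish with $z_1$ (namely $p_1=z_1^{(\ln\ln(1/z_1))^{-1}}$), which forces $p_1\ln z_1\to 0$ and keeps $\m(z_1)>0$; it must then invoke the second alternative ($\m>0$ and $\kapparq>0$), which requires controlling the minimizer $\hat\tau(z_1)$ of the Laplace exponent via a reductio ad absurdum and checking $\kapparq(z_1)\to r-q>0$. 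What the paper's harder construction buys is a kernel that is an arbitrarily small perturbation of the deterministic kernel $\kappa_\vartheta$ (the extra atoms carry vanishing mass) and an illustration of survival through the $\kapparq>0$ mechanism even though parasites proliferate along a typical lineage; but none of this is demanded by the statement, and your construction satisfies all the stated constraints ($(z_1,z_2)\in(0,1/2)^2$, $2(p_1+p_2)=1$, $\E[\min(\Theta,1-\Theta)]=\vartheta$, and \eqref{ass_Theta} holds trivially for a finitely supported $\Theta$ bounded away from $0$ and $1$). Your verification that $z_2(z_1)\to\vartheta/(2p_2)\in(0,1/2)$, which uses precisely $p_2\in(\vartheta,1/2)$, is the only point requiring care and you handle it correctly.
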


\section{Number of parasites in the cells} \label{section_beta}

We now consider that the dynamics of the parasites in a cell follows the SDE \eqref{X_sans_sauts} without the stable positive jumps, that is to say
\begin{align} \label{X_sans_sauts_stables} \mathfrak{X}_t =x + \int_0^t g(\mathfrak{X}_s)ds
+\int_0^t\sqrt{2\sigma^2 (\mathfrak{X}_s)}dB_s &+
\int_0^t\int_0^{p(\mathfrak{X}_{s^-})}\int_{\mathbb{R}_+}z\widetilde{Q}(ds,dx,dz).
\end{align}

In this case we can observe moderate infections, extinctions of the parasites in the cell population, but also cases where the number of parasites goes to infinity with 
an exponential growth in a positive fraction of the cells.\\

In order to state the next result, we need to introduce three assumptions. The first one is a technical assumption allowing to make couplings, that could probably be weakened.

\begin{ass} \label{ass_D}
The measure $\pi$ satisfies $ \int_{\R_+} z\pi(dz)<\infty.$
\end{ass}
Note that the weaker condition $ \int_{\R_+} \ln(1+z)\pi(dz)<\infty $, which is required in \cite{companion}, is therefore satisfied under Assumption \ref{ass_D}.
The second assumption provides a condition under which the number of parasites may reach $0$. It is almost a necessary and sufficient condition (see  \cite[Remark 3.2 and Theorem 3.3]{companion}).

\begin{enumerate}[label=\bf{(LN0)}]
\item\label{A2}  There exist $0<a<1,$ $\eta>0$ and $x_0> 0$ such that for all $x\leq x_0$
\begin{equation*}
\frac{g(x)}{x}-a\frac{\sigma^2(x)}{x^2}-2r\frac{1-\E\left[\Theta^{1-a}(x)\right]}{1-a} \leq -\ln(x^{-1}) \left(\ln\ln(x^{-1})\right)^{1+\eta}.
\end{equation*}
\end{enumerate}
The third assumption ensures that the process does not explode in finite time almost surely (see \cite[Theorem 4.1]{companion}). 
\begin{enumerate}[label=\bf{(SN$\infty$)}]
\item\label{SNinfinity}  There exist $0<a<1,$ and a nonnegative function $f$ on $\mathbb{R}_+$ such that
\begin{equation*}
\frac{g(x)}{x}-a\frac{\sigma^2(x)}{x^2}-2r\frac{1-\E\left[\Theta^{1-a}(x)\right]}{1-a}-p(x)I_a(x) =-f(x)+o(\ln x),\quad (x\rightarrow +\infty),
\end{equation*}
where
 \begin{align*}\label{eq:Ia}
%C_{a}=\frac{1}{2+\mathfrak{b}}\int_{\mathbb{R}_+}\frac{z}{(1+z)^{a}}\rho(dz),\quad 
%=\int_{\mathbb{R}_+}z\int_0^1(1+vz)^{-a}dv\rho(dz)
I_a(x) = \frac{a}{x^{2}}\int_{\mathbb{R}_+}z^2\left(\int_0^1\frac{(1-v)}{(1 + zvx^{-1})^{1+a}}dv\right)\pi(dz).
\end{align*}
\end{enumerate}

Note that the term $-2r\big(1-\E\left[\Theta(x)^{1-a}\right]\big)(1-a)^{-1} $ is not present in {\bf{(LN0)}} and {\bf{(SN$\infty$)}} in \cite{companion} as it is a constant when the law of $\Theta(x)$ does not depend on $x$. However, the extension of the proofs of \cite{companion} to this case is possible under the additional assumption \ref{ass_LB} (Lower Bound) on the partitioning kernel.
\begin{customass}{\bf{LB}}\label{ass_LB}
There exists a symmetric random variable $\Theta$ on $[0,1]$ and a constant $c>0$ such that $$\inf_{x\geq 0}\Theta(x)\geq c\Theta.$$
\end{customass}

Recall that the total number of cells is given by a continuous-time birth and death process with individual birth rate $r$ and individual death rate $q$. 
From classical results on branching processes (see for instance \cite{athreya1972branching}), we know that the cell population survives with probability
$0 \vee (1-q/r)$.
The long time behaviour for the number of parasites in the cells is described in the next proposition. We denote by $N_t$ the cardinality of $V_t$. 

\begin{prop} \label{prop:beta_ct:g_var}
Assume that the number of parasites in a cell follows the SDE \eqref{X_sans_sauts_stables}, that Assumption \ref{ass_D} holds, that $r>q \equiv q(x)\geq 0$.
\begin{enumerate}[label=\roman*), ref = {\it \roman*)}]
 \item \label{it:1-31} If $\sup_{x \geq 0} \E[\ln^2 \Theta(x)]<\infty $, and there exists $\eta>0$ such that for $x \geq 0$,  $$\frac{g(x)}{x}+ 2 r \E[\ln \Theta(x)]> \eta,$$
 and if the function $x \mapsto (\sigma^2(x)+p(x))/x$ is bounded and there exists $\eps_1>0$ such that $$\int_{\R_+}z \ln^{1+\eps_1}(1+z)\pi(dz)<\infty,$$
 then for $\eps>0$,
\begin{align*}
\liminf_{t\rightarrow \infty} \mathbb{E}\left[ \mathbf{1}_{\{ N_t\geq 1 \}} \frac{\#\lbrace u\in V_t:X_t^u >e^{(\eta/2r-\eps)t}\rbrace}
{N_t} \right] > 0.
\end{align*} 
\item \label{it:2-31} If Assumption \ref{ass_LB} and \ref{A2} hold, and if there exists $\eta>0$ such that for all $x \geq 0$, $$\frac{g(x)}{x}+ 2 r \E[\ln \Theta(x)]<- \eta,$$ then for $\eps>0$ 
$$ \lim_{t\rightarrow \infty}\mathbf{1}_{\{ N_t\geq 1 \}}\frac{\#\lbrace u\in V_t:X_t^u >\eps \rbrace}
{N_t} = 0 \quad \text{in probability}.
 $$
\item \label{it:3-31} If Assumption \ref{ass_LB}, \ref{A2} and \ref{SNinfinity} hold, and if there exist $\eta>0$ and $x_0 \geq 0$ such that for $x \geq x_0$,  
$$\frac{g(x)}{x} - \frac{\sigma^2(x)}{x^2}+ 2 r \E[\ln \Theta(x)]-p(x)\int_0^\infty\left(\frac{z}{x}-\ln\left(1+\frac{z}{x}\right)\right)\pi(dz)<- \eta, $$
then
\begin{equation*}
\lim_{t\rightarrow \infty}\mathbf{1}_{\{ N_t\geq 1 \}}\frac{\#\lbrace u\in V_t: X_t^u >0\rbrace}
{N_t} = 0 \quad a.s.
\end{equation*}
 \end{enumerate}
\end{prop}

Proposition \ref{prop:beta_ct:g_var} extends \cite[Theorem 4.2]{BT11} 
allowing for non constant drift for the number of parasites, a general class of diffusive functions, positive jumps, a parasites repartition kernel depending on the number of parasites carried by the mother, as well as the possibility for the cells to die at a constant rate.

Again, from Proposition \ref{prop:beta_ct:g_var} we see that in some sense an equal sharing is the worst strategy at the population level. Indeed, from the concavity of the functions $x \mapsto \ln x $ and $x \mapsto x^{1-a}$ for $0\leq a<1$, we can prove that if the proportion of highly infected cells is positive for large time (Proposition \ref{prop:beta_ct:g_var}\ref{it:1-31}) with a given partitioning strategy, then the equal sharing strategy would have led to the same result. Conversely, if the equal sharing strategy guarantees the healing of the cell population for large time, then it would have been the case for any partitioning strategy.
\begin{lemma}
Under the assumptions of Proposition \ref{prop:beta_ct:g_var}, we have the following:
\begin{enumerate}[label=\roman*), ref = {\it \roman*)}]
\item If there exists a real function $(x, \zeta) \mapsto \Theta(x, \zeta)$ such that the assumptions of Proposition \ref{prop:beta_ct:g_var}\ref{it:1-31} hold, then they also hold for the equal sharing (corresponding to $\Theta \equiv 1/2$).
\item If the assumptions of point \ref{it:2-31} (resp. \ref{it:3-31}) of Proposition \ref{prop:beta_ct:g_var} hold for the equal sharing ($\Theta \equiv 1/2$), then they also hold for any real function $(x, \zeta) \mapsto \Theta(x, \zeta)$ such that $\E[\Theta(x)] = 1/2$ and $|\E[\ln \Theta(x)]|<\infty$.
\end{enumerate}
\end{lemma}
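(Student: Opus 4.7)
The plan is to observe that every $\Theta$-dependent term appearing in the hypotheses of Proposition \ref{prop:beta_ct:g_var} and in Assumptions \ref{A2} and \ref{SNinfinity} is monotone in $\E[\ln\Theta(x)]$ or $\E[\Theta^{1-a}(x)]$, and that the symmetry of the partitioning law, which forces $\E[\Theta(x)]=1/2$, makes the Dirac mass $\kappa=\delta_{1/2}$ extremal for both quantities. Since $\ln$ and $y\mapsto y^{1-a}$ (for $0<a<1$) are concave on $(0,1)$, Jensen's inequality yields
\[
\E[\ln\Theta(x)] \leq \ln\E[\Theta(x)] = -\ln 2, \qquad \E[\Theta(x)^{1-a}] \leq \bigl(\E[\Theta(x)]\bigr)^{1-a} = 2^{a-1},
\]
with equality if and only if $\Theta\equiv 1/2$. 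This single observation will drive the whole argument.

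For part (i), I would check each condition of Proposition \ref{prop:beta_ct:g_var}\ref{it:1-31} under the substitution $\Theta\equiv 1/2$. The moment bound $\sup_x\E[\ln^2\Theta(x)]<\infty$ collapses to $(\ln 2)^2<\infty$, and the conditions on $(\sigma^2+p)/x$ and on $\pi$ do not involve $\Theta$. Only the drift condition requires an argument, and there the first Jensen bound immediately gives
\[
\frac{g(x)}{x} - 2r\ln 2 \;\geq\; \frac{g(x)}{x} + 2r\E[\ln\Theta(x)] \;>\; \eta,
\]
so the drift condition transfers to the equal sharing with the same constant $\eta$.

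For part (ii), I would run the comparison in the opposite direction. If the drift inequality of \ref{it:2-31} or \ref{it:3-31} holds for $\Theta\equiv 1/2$, then for any $\Theta$ with $\E[\Theta(x)]=1/2$ replacing $-2r\ln 2$ by the smaller quantity $2r\E[\ln\Theta(x)]$ can only decrease the LHS, so the inequality is preserved with the same $\eta$; the $\sigma^2/x^2$ and $p(x)\int(\cdots)\pi(dz)$ terms in \ref{it:3-31} are independent of $\Theta$. For Assumption \ref{A2}, the second Jensen bound gives $-2r(1-\E[\Theta^{1-a}(x)])/(1-a) \leq -2r(1-2^{a-1})/(1-a)$, so the LHS for the general $\Theta$ is bounded above by its value for $\Theta\equiv 1/2$, which by hypothesis is $\leq -\ln(x^{-1})(\ln\ln(x^{-1}))^{1+\eta}$.

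The only subtle point, and the main obstacle, is Assumption \ref{SNinfinity}, which is stated as an equality rather than an inequality. I would handle it by writing
\[
\mathrm{LHS}_{\Theta}(x) = \mathrm{LHS}_{1/2}(x) - \Delta(x) = -f(x) - \Delta(x) + o(\ln x),
\]
with $\Delta(x):=\frac{2r}{1-a}\bigl(2^{a-1}-\E[\Theta^{1-a}(x)]\bigr)\geq 0$ by the second Jensen bound, and then setting $\tilde f := f+\Delta\geq 0$; the resulting identity $\mathrm{LHS}_{\Theta}(x)=-\tilde f(x)+o(\ln x)$ is precisely \ref{SNinfinity} for the general $\Theta$. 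Assumption \ref{ass_LB} is trivial for $\Theta\equiv 1/2$ and is part of the standing setup for the general $\Theta$. Apart from this bookkeeping for the equality form of \ref{SNinfinity}, every remaining step reduces to the one-line Jensen observation recorded above.
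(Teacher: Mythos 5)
Your proposal is correct and takes essentially the same route the paper intends: the paper gives no separate proof of this lemma, only the remark preceding it that the result follows "from the concavity of the functions $x \mapsto \ln x$ and $x \mapsto x^{1-a}$", which is precisely the Jensen argument (using $\E[\Theta(x)]=1/2$) that you carry out in detail. Your extra bookkeeping for the equality form of \ref{SNinfinity} via $\tilde f = f+\Delta$ and your remark that \ref{ass_LB} must simply be retained as a standing hypothesis for the general $\Theta$ fill in exactly the details the paper leaves implicit.
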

  
The rest of the paper is dedicated to the proofs of the results presented in previous sections. As mentioned before, the proofs 
rely on the construction of an auxiliary process, which gives information on the dynamics of the number of parasites in a `typical' cell, 
that is to say a cell chosen uniformly at random among the cells alive. 
\section{Many-to-One formula}\label{sec:MTO}

Recall from \eqref{Ztdirac} that the population state $Z_t$ at time $t$ can be represented by a sum of Dirac masses. 
 We denote by $(M_t,t\geq 0)$ the first-moment semi-group associated with the population 
process $Z$ given for all measurable functions $f$ and $x,t\geq 0$ by
$$
M_tf(x)=\mathbb{E}_{\delta_x}\left[\sum_{u\in V_t} f(X_t^u)\right].
$$ 
The trait of a typical individual in the population is characterized by the so-called 
auxiliary process $Y$ (see \cite[Theorem 3.1]{marguet2016uniform} for detailed computations and proofs). In our case, for constant birth and death rate, $Y$ is a time-homogeneous Markov process and for all measurable bounded functions $F:\mathbb{D}([0,t],\mathbb{R}_+)\rightarrow\mathbb{R}$, we have:
\begin{equation}\label{eq:mto}
\E_{\delta_{x}}\left[\sum_{u\in V_{t}}F\left(X_{s}^{u},s\leq t\right)\right]=e^{(r-q)t}\E_{x}\left[F\left(Y_{s}^{(t)},s\leq t\right)\right].
\end{equation}
Here $(Y_{t}, t\geq 0)$ is a Markov process with associated infinitesimal generator 
$\mathcal{A}$ given for $f\in\mathcal{C}_b^2(\mathbb{R}_+)$ and $x\geq 0$ by: 
\begin{align*}
\mathcal{A}f(x)= &\mathcal{G}f(x) + 2r\int_{0}^1\left(f\left(\theta x\right)-f\left(x\right)\right)\kappa(x,d\theta),
\end{align*}
We refer the reader \cite[Section 4.2]{marguet2022spread} for details on the role of the death rate in the auxiliary process.

\section{Proofs}\label{sec:proofs}
\subsection{Proofs of Section \ref{sec_mean_numb_cells}}

\begin{proof}[Proof of Proposition \ref{CNS_ext_ps}]
Let us consider the auxiliary process introduced in Section \ref{sec:MTO} as the unique strong solution to the following SDE:
\begin{align} \label{eq:Y-proof3.2}
Y_t= x + g\int_0^tY_s ds &+ \int_0^t \sqrt{2\sigma^2Y_s^2}dB_s + \int_0^t \int_0^{Y_{s^-}}
\int_{\mathbb{R}_+}zR(ds,dx,dz) \nonumber \\
&+\int_0^t\int_0^1  (\theta-1)Y_{s^-}P(ds,d\theta),
\end{align}
where $P$ is a Poisson point measure on $\R_+\times[0,1]$ with intensity $2rds\otimes \kappa(d\theta)$.
We can thus apply \eqref{eq:mto} to the function 
$$F((X_s^u,s \leq t))= \mathbf{1}_{\{X_t^u <\infty\}},$$
and obtain
\begin{equation*}
\E_{\delta_{x}}\left[\mathfrak{C}_t\right]=e^{(r-q)t}\P_{x}\left(Y_{t}<\infty\right),
\end{equation*}
where we recall that $\mathfrak{C}_t$ is the number of cells alive at time $t$ (that is to say containing a finite number of parasites).
The study of the asymptotic behaviour of $\E\left[\mathfrak{C}_t\right]$ is thus reduced to the study of the asymptotics of the non-explosion probability of $Y$.
Following \cite{palau2016asymptotic}, the long time behaviour of $\P_{x}\left(Y_{t}<\infty\right)$ depends on the properties of the L\'evy process $L$ given by:
\begin{equation} \label{def_Lt} 
L_t:=(g-\sigma^2)t + \sqrt{2 \sigma^2}B_t + \int_0^t \int_0^1  \ln \theta P(ds,d\theta),
\end{equation}
where $B$ and \al{$P$} are the same as in \eqref{eq:Y-proof3.2}.
Its Laplace exponent $\laplexp$ is 
\begin{equation*}
\laplexp(\lambda):=\ln \E[e^{\lambda L_1}]
= \lambda (g-\sigma^2) + \lambda^2 \sigma^2 + 2 r \left[\int_0^1 \theta^\lambda \kappa(d\theta) -1\right],
\end{equation*}
for any $\lambda \in (\lambda^-, \infty)$. Recall that $\lambda^-$ and $\m$ have been defined on page \pageref{lambdamoinsm}. Then an application of \cite[Proposition 2.1]{palau2016asymptotic} gives
the three following regimes:
\begin{itemize}
\item[i)] If $\m<0$, then
for every $x>0$ there exists $0<c_1(x)<1$ such that
\begin{equation*}
\underset{t\rightarrow\infty}{\lim}\P_{x}(Y_t<\infty)=c_1(x). 
\end{equation*}
\item[ii)] If $\m=0$ and $\lambda^{-}<0$,
then
for every $x>0$ there exists $c_2(x)>0$ such that
\begin{equation*}
\underset{t\rightarrow\infty}{\lim}\sqrt{t}\P_{x}(Y_t<\infty)=c_2(x).  
\end{equation*}
\item[iii)] If $\m>0$, then
for every $x>0$ there exists $c_3(x)>0$ such that
\begin{equation*}
\underset{t\rightarrow\infty}{\lim}t^{\frac{3}{2}} e^{-\laplexp(\hat{\tau})}\P_{x}(Y_t<\infty)=c_3(x). 
\end{equation*}
\end{itemize}
It ends the proof.
\end{proof}
 
\begin{proof}[Proof of Proposition \ref{CS_ext_ps}]
First, we consider the case $g(x) = gx$ and $q(x)\equiv q$. 
The process $\mathfrak{X}$ solution to \eqref{X_sans_sauts} has the same law as the unique solution to the SDE
\begin{align*} \tilde{\mathfrak{X}}_t =x +& g\int_0^t \tilde{\mathfrak{X}}_sds
+\int_0^t\sqrt{2\sigma^2 \tilde{\mathfrak{X}}^2_s}dB_s+\int_0^t\sqrt{2\mathfrak{s}^2 (\tilde{\mathfrak{X}}_s)\tilde{\mathfrak{X}}_s}dW_s \\+&
\int_0^t\int_0^{\tilde{\mathfrak{X}}_{s^-}}\int_{\mathbb{R}_+}z\widetilde{Q}(ds,dx,dz)+
\int_0^t\int_0^{\tilde{\mathfrak{X}}_{s^-}}\int_{\mathbb{R}_+}zR(ds,dx,dz),
\end{align*}
where $W$ is a Brownian motion independent of $B$, $Q$ and $R$.
Notice that under the assumptions of Proposition \ref{CS_ext_ps}, $y \mapsto \mathfrak{s}(y)\sqrt{y}$ satisfies point $ii)$ of Assumption \ref{ass_A}.
As in the previous case, explicit computations are possible, and if we keep the notation $Y$ for the auxiliary process associated to 
$\tilde{\mathfrak{X}}$ for the sake of simplicity, we obtain that $Y$ is solution to:
 \begin{align} \label{X_sans_sauts4} Y_t =&x + g \int_0^t Y_sds
+\int_0^t\sqrt{2\sigma^2 Y^2_s}dB_s+\int_0^t\sqrt{2\mathfrak{s}^2 (Y_s)Y_s}dW_s+
\int_0^t\int_0^{Y_{s^-}}\int_{\mathbb{R}_+}z\widetilde{Q}(ds,dx,dz)\nonumber\\&+
\int_0^t\int_0^{Y_{s^-}}\int_{\mathbb{R}_+}zR(ds,dx,dz)+\int_0^t \int_0^1  (\Theta(Y_{s^-},\zeta)-1)Y_{s^-}N(ds,d\zeta)\\=&\cha{x + g \int_0^t Y_sds
+\int_0^t\sqrt{2\mathfrak{s}^2 (Y_s)Y_s}dW_s+
\int_0^t\int_0^{Y_{s^-}}\int_{\mathbb{R}_+}z\widetilde{Q}(ds,dx,dz)\nonumber}\\&\cha{+
\int_0^t\int_0^{Y_{s^-}}\int_{\mathbb{R}_+}zR(ds,dx,dz)+\int_0^t Y_{s^-}\left(\sqrt{2\sigma^2}dB_s+\int_0^1  (\Theta(Y_{s^-},\zeta)-1)N(ds,d\zeta)\right),} \nonumber
\end{align}
where $Y_0=x\geq 0$ and $N$ is \cha{a} PPM on $\R_+\times[0,1]$ with intensity $2rds\otimes d\zeta$, \cha{and independent of $B,W,R$ and $Q$}. 

Let us introduce the process $(\tilde{L}_t, t \geq 0)$ via
\begin{equation} \label{def_tildeLt} 
\tilde{L}_t:=(g-\sigma^2)t + \sqrt{2 \sigma^2}B_t + \int_0^t \int_0^1  \ln \Theta(Y_{s^-},\zeta) N(ds,d\zeta).
\end{equation}

 Then by an application of It\^o's formula with jumps similarly as in \cite{palau2018branching} 
we can show that for any $x,\lambda,0 \leq s \leq t$,
\begin{align}\label{eq:expZt}
 e^{-Y_se^{-\tilde{L}_s}v_t(s,\lambda, \tilde{L})} = \int_0^s e^{-Y_ue^{-\tilde{L}_u}v_t(u,\lambda, \tilde{L})}
e^{-2\tilde{L}_u}v^2_t(u,\lambda, \tilde{L})\mathfrak{s}^2(Y_u)Y_udu+ \mathfrak{M}_s,
\end{align}
where $(\mathfrak{M}_s, 0 \leq s \leq t)$ is a local martingale conditionally on $(\tilde{L}_s,0\leq s \leq t)$ and
$v_t(.,\lambda,\tilde{L})$ is the unique solution to
$$ \partial_s v_t(s,\lambda,\tilde{L})=e^{\tilde{L}_s}\psi_0\left(e^{-\tilde{L}_s}v_t(s,\lambda,\tilde{L})\right), \quad v_t(t,\lambda,\tilde{L})=\lambda, $$
where
$$\psi_0(\lambda) = c_\mathfrak{b} \lambda^{1+\mathfrak{b}} +\int_0^\infty \left( e^{-\lambda z}-1+\lambda z \right)\pi(dz).$$
With our assumptions on the function $\mathfrak{s}$, the process 
$$ \left( \exp\left(-Y_ue^{-\tilde{L}_u}v_t(u,\lambda, \tilde{L})\right)
e^{-2\tilde{L}_u}v^2_t(u,\lambda,\tilde{L})\mathfrak{s}^2(Y_u)Y_u,0\leq u \leq t\right) $$
is bounded by a finite quantity depending only on $(\tilde{L}_u,0\leq u \leq t)$ (using that $x\mapsto e^{-x}$ and $x\mapsto e^{-x}x^\mathfrak{c}$ are bounded on $\R_+$, where $\mathfrak{c}$ is defined in the assumptions of Proposition \ref{CS_ext_ps}).
Hence $(\mathfrak{M}_s, 0 \leq s \leq t)$ is a true martingale conditionally on $(\tilde{L}_s,0\leq s \leq t)$, and from \eqref{eq:expZt} we get
\begin{equation} \label{exp_Lap_CSBPRE}\E_x \left[ e^{-\lambda Y_te^{-\tilde{L}_t}} \right]= \E_x \left[ e^{- Y_te^{-\tilde{L}_t}v_t(t,\lambda,\tilde{L})} \right]\geq
\E \left[ e^{-xv_t(0,\lambda,\tilde{L})} \right]. \end{equation}
Using that $\psi_0(\lambda)>c_\mathfrak{b}\lambda^{1+\mathfrak{b}}$, we obtain
\begin{align*}
 \partial_s v_t(s,\lambda,\tilde{L})\geq c_\mathfrak{b} e^{\tilde{L}_s}\left(e^{-\tilde{L}_s}v_t(s,\lambda,\tilde{L})\right)^{1+\mathfrak{b}}, \quad v_t(t,\lambda,\tilde{L})=\lambda,
\end{align*}
which entails
$$ v_t(0,\lambda,\tilde{L}) \leq \left( \lambda^{-\mathfrak{b}}+\mathfrak{b} c_\mathfrak{b} \int_0^t e^{-\mathfrak{b} \tilde{L}_s}ds \right)^{-1/\mathfrak{b}}. $$
Combining this latter with \eqref{exp_Lap_CSBPRE}, we obtain
$$  \E_x \left[ e^{-\lambda Y_te^{-\tilde{L}_t}} \right]\geq  \E \left[ e^{-x\left( \lambda^{-\mathfrak{b}}+\mathfrak{b} c_\mathfrak{b} \int_0^t e^{-\mathfrak{b} \tilde{L}_s}ds 
\right)^{-1/\mathfrak{b}}} \right], $$
and letting $\lambda$ tend to $0$, we finally get:
$$  \P_x \left( Y_t<\infty \right)\geq  \E \left[ e^{-x\left( \mathfrak{b} c_\mathfrak{b} \int_0^t e^{-\mathfrak{b} \tilde{L}_s}ds \right)^{-1/\mathfrak{b}}} \right]. $$
From the assumptions of Proposition \ref{CS_ext_ps} we see that we can couple the processes $L$ and $\tilde{L}$, defined in \eqref{def_Lt} and \eqref{def_tildeLt}, respectively, in such a way that
$$ \tilde{L}_t \leq L_t \quad \text{a.s. for all } t \geq 0. $$
We thus deduce that
$$  \P_x \left( Y_t<\infty \right)\geq  \E \left[ e^{-x\left( \mathfrak{b} c_\mathfrak{b} \int_0^t e^{-\mathfrak{b} L_s}ds \right)^{-1/\mathfrak{b}}} \right]. $$
As stated in \cite{palau2016asymptotic}, the right-hand side of the last inequality is equal to the probability of non-explosion before time $t$ \cha{of} 
a self-similar continuous state branching process in a L\'evy random environment.
Therefore, by  \cite[Proposition 2.1]{palau2016asymptotic}, we get
\begin{equation}\label{mathfrakcx}
\underset{t\rightarrow +\infty}{\liminf}v(\m,t)\P_x \left( Y_t<\infty \right) =: \mathfrak{a}(x)>0,
\end{equation}
where
\begin{align*}\left\lbrace
\begin{array}{ll}
v(\m,t) = 1,&\quad \text{for }\m<0,\\
v(0,t) = \sqrt{t},&\\
v(\m,t) = t^{3/2}e^{t\laplexp(\hat{\tau})},&\quad \text{for }\m>0.\\
\end{array}
\right.
\end{align*}
Next, we consider the auxiliary process $\tilde{Y}$ in the case where the number of parasites is described by \eqref{X_sans_sauts}, 
with $p(x)=x$, $\sigma^2(x) = \mathfrak{s}^2(x) x +\sigma^2 x^2$ and $g(x)\leq gx$. In this case $\tilde{Y}$ has the same law as a process satisfying 
\eqref{X_sans_sauts4} replacing $g\int_0^tY_sds$ by $\int_0^tg(Y_s)ds\leq g\int_0^tY_sds$. Hence if we choose this version of $\tilde{Y}$, $\tilde{Y}_t\leq Y_t$ for all $t\geq 0$ using that both SDEs have a unique strong solution and 
that $\tilde{Y_0}=Y_0$. Therefore, 
$$
\mathbb{P}_x(\tilde{Y}_t<\infty)\geq \mathbb{P}_x(Y_t<\infty).
$$
As a consequence, from the Many-to-One formula \eqref{eq:mto} and the assumption that $q(\cdot) \equiv q$, we obtain for any $x >0$ and $t$ large enough:
\begin{align*}
 \E_{\delta_{x}}[\mathfrak{C}_t] = e^{(r-q)t}\mathbb{P}_x(\tilde{Y}_t<\infty)
& \geq  e^{(r-q)t} \mathbb{P}_x(Y_t<\infty)\\
& = e^{(r-q)t} v^{-1}(\m,t) \left( v(\m,t)\P_x \left( Y_t<\infty \right)\right)\\
& \geq e^{(r-q)t} v^{-1}(\m,t) \mathfrak{a}(x)/2,
\end{align*}
where we recall that $\mathfrak{a}(x)$ has been defined in \eqref{mathfrakcx}.
Adding that either $(\m>0\text{ and }\kapparq>0)$ or $\m\leq 0$ holds under the assumptions of Proposition \ref{CS_ext_ps}, we obtain that 
$$ \lim_{t \to \infty} \E_{\delta_{x}}[\mathfrak{C}_t] = \infty. $$
Now let us come back to the general case where for any $x \geq 0$, $
q(x) \leq q$ for some $q\geq 0$. Then for any $x > 0$ we can couple the process $X$ with 
a process $X^{(q)}$ with death rate $q$ and number of cells alive at time $t$ given by $\mathfrak{C}_t^{(q)}$, and such that
$$ \E_{\delta_{x}}[\mathfrak{C}_t] \geq \E_{\delta_{x}}[\mathfrak{C}^{(q)}_t]. $$
Such a coupling may be obtained for instance by first realizing $X$ and then obtaining $X^{(q)}$ by killing additional cells at rate $q-q(x)$ for a cell 
containing a number $x$ of parasites. It ends the proof.
\end{proof}

\begin{proof}[Proof of Lemma \ref{lem_dep}]
\textbf{$i)$}
Let $\mathfrak c :=\hat{\tau} (g-\sigma^2)+ \hat{\tau}^2 \sigma^2 +  r \left[2\E[ \Theta^{\hat{\tau}}] -1\right]$, so that $\kapparq = \mathfrak c -q$. The value of $\m$ does not depend on $q$. Hence, we distinguish three cases:
\begin{enumerate}
\item If $\m>0$ and $\mathfrak c\leq 0$, then for all $q\geq 0,$ $\kapparq\leq 0$ and $\mathfrak{A}_x(g,\sigma,r,q)=0$: we thus choose $q_{\lim}(g,\sigma,r)=0$.
\item If $\m>0$ and $\mathfrak c > 0$, then there is a unique $q_{\lim}(g,\sigma,r)=\mathfrak c>0$ such that $\kapparq\leq 0$ for $q \geq q_{\lim}(g,\sigma,r)$ and $\kapparq> 0$ for $q < q_{\lim}(g,\sigma,r)$.
\item If $\m\leq 0$, we choose $q_{\lim}(g,\sigma,r,q)=r$.
\end{enumerate}
We conclude the proof of {\it i)} using Corollary \ref{Cor_CNS_ext_ps}.

\textbf{$ii)$}
Let $\eta:=(g-\sigma^2)/(2\E[\ln(1/\Theta)])$.
\begin{enumerate}
\item First assume that $ \eta<q. $ In this case, if
$ q<r$, we get $\m<0$ and $\mathfrak{A}_x(g,\sigma,r,q)= \infty$ according to Corollary~\ref{Cor_CNS_ext_ps}.
 Moreover, $ q\geq r$ implies $\mathfrak{A}_x(g,\sigma,r,q)= 0$ according to Corollary \ref{Cor_CNS_ext_ps}.
 We thus choose $r_{\lim}(g,\sigma,q)=q$ in this case.

\item Next, assume that 
$ q \leq \eta. $
Then, 

\noindent
- If $r \leq q$, we obtain $\mathfrak{A}_x(g,\sigma,r,q)= 0$ according to Corollary \ref{Cor_CNS_ext_ps}.

\noindent
- If $r > \eta $, then $r>q$ and  $\m\leq 0$. According to Corollary~\ref{Cor_CNS_ext_ps}, $\mathfrak{A}_x(g,\sigma,r,q)= \infty$.

\noindent
- If $q < r =\eta$, then $\m=0$. According to Corollary~\ref{Cor_CNS_ext_ps}, $\mathfrak{A}_x(g,\sigma,r,q)= \infty$.

\noindent
- If $q < r<\eta$, then $\m>0$. Thus, the value of $\mathfrak{A}_x(g,\sigma,r,q)$ depends on the sign of $\kapparq$.
From \eqref{defLaplexp}, we see that for any $\lambda \in \R$ the value of $\phi(\lambda)$ increases when $r$ increases. This implies that it is also the case for the value of $\phi(\hat{\tau})$, by definition of $\hat{\tau}$ as the argument of the minimum of $\phi$. As a consequence $\kapparq=\phi(\hat{\tau})+r-q$ is strictly increasing with $r$. Next, for $r=q$,
$$ \kapparq=\phi(\hat{\tau})+q-q <0,$$
and, when $r$ tends to $\eta$, $\hat{\tau}$ tends to $0$ and thus, $\phi(\hat\tau)$ tends to $0$. As $r>q$, $\kapparq$ tends to a positive value as $r$ tends to $\eta$.
We deduce that there exists a unique $r_{\lim}(g,\sigma,q)  \in (q, \eta)$ such that $\kapparq=0$  and that $\kapparq<0$ when $r<r_{\lim}(g,\sigma,q)$ (resp. $\kapparq>0$ when $r>r_{\lim}(g,\sigma,q)$). We conclude again by an application of Corollary \ref{Cor_CNS_ext_ps}.
\end{enumerate}

\textbf{$iii)$}
\begin{enumerate}
\item If $r \leq q$, $\mathfrak{A}_x(g,\sigma,r,q)= 0$ and $g_{\lim}(\sigma,r,q)=0$ satisfies the needed property.
\item If $q<r$ and $g \leq \sigma^2 - 2r \E[\ln \Theta]$, then $\m \leq 0$ and  $\mathfrak{A}_x(g,\sigma,r,q)= \infty$.
\item If $q<r$ and $g > \sigma^2 - 2r \E[\ln \Theta]$, then $\m > 0$ and we have to study the sign of $\kapparq$.
As $r>q$, $\kapparq$ is positive when $g$ tends to $\sigma^2 - 2r \E[\ln \Theta]$, because $\hat\tau$ tends to $0$. Moreover, from the equality
\begin{equation}\label{equal_g} g = \sigma^2 - 2 \hat\tau \sigma^2 - 2r \E[\Theta^{\hat{\tau}}\ln \Theta] \end{equation}
characterizing $\hat{\tau}$,  we see that $\hat{\tau}$ goes to $-\infty$ when $g$ goes to $+\infty$. Combining the definition of $\kapparq$ in \eqref{eq:kapparq} with \eqref{equal_g}, we obtain
\begin{align*}  \kapparq&= \frac{\hat{\tau}}{2}(g-\sigma^2) + r \left[ \E[\Theta^{\hat{\tau}}(2-\ln \Theta^{\hat{\tau}})] -1\right]- q\\
& \leq \frac{\hat{\tau}}{2}(g-\sigma^2) + r  \left[ \E[\Theta^{\hat{\tau}}](2-\ln \E[\Theta^{\hat{\tau}}]) -1\right]- q 
\end{align*}
where the inequality is a consequence of the concavity of the function $x \mapsto x(2-\ln x)$. Therefore, when $g$ goes to $\infty$, $\hat{\tau}$ goes to $-\infty$, and $\kapparq$ goes to $-\infty$. Moreover, we have
\begin{align*} \frac{d\kapparq}{dg}&= \frac{d \hat{\tau}}{2dg}(g-\sigma^2)+\frac{\hat{\tau}}{2} + r  \frac{d\E[\Theta^{\hat{\tau}}(2-\ln \Theta^{\hat{\tau}})]}{d \hat{\tau} }  \frac{d \hat{\tau}}{dg} \\
&= \frac{d \hat{\tau}}{2dg}(g-\sigma^2)+\frac{\hat{\tau}}{2} + r  \E[\Theta^{\hat{\tau}}\ln \Theta(1-\ln \Theta^{\hat{\tau}})] \frac{d \hat{\tau}}{dg},
\end{align*}
which is negative as a sum of negative terms, as from \eqref{equal_g}, we see that $\hat{\tau}$ decreases when $g$ increases. Thus, $\kapparq$ is decreasing with $g$. We deduce that there exists a unique $g_{\lim}(\sigma,r,q)  \in (\sigma^2-2r\E[\ln \Theta],\infty)$ such that $\kapparq=0$,  and that $\kapparq>0$ when $g<g_{\lim}(\sigma,r,q)$ (resp. $\kapparq<0$ when $g>g_{\lim}(\sigma,r,q)$). We conclude again by an application of Corollary \ref{Cor_CNS_ext_ps}.

\end{enumerate}

\textbf{$iv)$}
\begin{enumerate}
\item If $r \leq q$, $\mathfrak{A}_x(g,\sigma,r,q)= 0$ and $\sigma_{\lim}(g,r,q)=\infty$ satisfies the needed property.
\item If $q<r$ and $g \leq  - 2r \E[\ln \Theta]$, then $\m \leq 0$ and  $\mathfrak{A}_x(g,\sigma,r,q)= \infty$, and $\sigma_{\lim}(g,r,q)=0$ satisfies the needed property.
\item If $q<r$ and $g >  - 2r \E[\ln \Theta]$
\begin{itemize}
\item If $ \sigma^2 \geq  g+ 2r \E[\ln \Theta]$, then $\m \leq 0$ and  $\mathfrak{A}_x(g,\sigma,r,q)= \infty$.
\item If $ \sigma^2 <  g+ 2r \E[\ln \Theta]$, then $\m > 0$ and we have to study the sign of $\kapparq$. From \eqref{defLaplexp} and \eqref{eq:kapparq}, we have
$$
\frac{\partial\kapparq}{\partial\sigma^2} = \frac{\partial\hat\tau}{\partial\sigma^2}\phi'(\hat\tau) + \hat\tau(\hat\tau - 1)=\hat\tau(\hat\tau - 1),
$$
where we used that $\phi'(\hat\tau)=0$ because $\hat\tau$ is the argument of the minimum of $\phi$. As $\hat\tau<0$, we obtain that
$\kapparq$ is increasing with $\sigma^2$. Moreover, when $\sigma^2$ tends to $g+ 2r \E[\ln \Theta]$, 
$$ \kapparq \longrightarrow \hat{\tau}^2 g + r-q +2r \E[\Theta^{\hat{\tau}}-1+(\hat\tau - 1)\ln \Theta^{\hat{\tau}}]>0.   $$
For $\sigma^2=0$, combining \eqref{equal_g} and the definition of $\kapparq$, we have
$$
\kapparq=2r\E[\Theta^{\hat{\tau}}(1 - \ln(\Theta^{\hat\tau}))] - r-q,
$$
which can be positive or negative, depending on $g,r$ and the law of $\Theta$. Then, if for $\sigma^2=0$, $\kapparq<0$, there exists $\sigma_{\lim}>0$ such that $\kapparq=0$ for $\sigma=\sigma_{\lim}$. Else, $\kapparq>0$ for all $\sigma\geq 0$ and $\mathfrak{A}_x(g,\sigma,r,q)=\infty$.
\end{itemize}
We conclude as for the previous points.
\end{enumerate}
\end{proof}

\subsection{Proof of Section \ref{sec:role-partitioning}}
We now explore how the long time behaviour of the infection depends on 
the parasites repartition kernel. 
We focus in particular on the uniform and the equal sharing, two cases where explicit computations are doable.
 
\begin{proof}[Proof of Corollary \ref{Cor_CNS_ext_ps_unif}]
We first consider $\kappa(d\theta)=d\theta$. We get $\lambda^-=-1$ and for $\lambda>-1$,
$$\laplexp(\lambda)=
%\lambda g + 2 r \left[\int_0^1 \theta^\lambda d\theta -1\right]=
\lambda g+ 2 r \left[\frac{1}{\lambda+1} -1\right],\quad \laplexp'(\lambda)=
g - 2 r \frac{1}{(\lambda +1)^2}, \quad \m=
g+ 2r\int_0^1 \ln\theta d\theta = g- 2r.$$

The minimum of $\laplexp$ on $(-1,\infty)$ is reached at 
$ \hat{\tau} = \sqrt{2rg^{-1}}-1 $
and equals
$$ \laplexp(\hat{\tau})=\left(\sqrt{\frac{2r}{g}}-1\right) g + 2 r \left[\sqrt{\frac{g}{2r}} -1\right] = 2 \sqrt{2rg}-g-2r.  $$
Let us look at the sign of $\laplexp(\hat{\tau}) +r-q = 2\sqrt{2rg}-g-r-q.$ This quantity is nonpositive if and only if
$
8rg\leq g^2+ (r+q)^2+ 2g(r+q).
$
Therefore, setting $X = g$, we have to solve the second degree polynomial equation
$$
X^2 +2X(q-3r)+(r+q)^2= 0. 
$$
Recall that $r>q$. In this case, the two solutions are given by 
$$X_1 = 3r-q-2\sqrt{2r(r-q)},\ X_2 = 3r-q+2\sqrt{2r(r-q)},$$
so that $\laplexp(\hat{\tau}) +r-q $ is negative for $g<X_1$ or $g> X_2$.
Notice that $X_1-2r =r-q-2\sqrt{2r(r-q)}=\sqrt{r-q}(\sqrt{r-q}-2\sqrt{2r})<0$ and $X_2>2r$. Then, the condition $(\m>0 \text{ and } \laplexp(\hat\tau) +r-q\leq 0)$ is equivalent to
$g\geq 3r-q+2\sqrt{2r(r-q)},
$
and using Corollary \ref{Cor_CNS_ext_ps} \ref{it:1-23}, \al{we proved that $g_{\text{lim}}\leq 3r-q+2\sqrt{2r(r-q)}$}.

\al{We now prove that $g_{\text{lim}}\geq 3r-q+2\sqrt{2r(r-q)}$.} If $g< 3r-q+2\sqrt{2r(r-q)}$, we distinguish two cases: if $g\leq 2r$ then $\m\leq 0$ and if 
$ 2r<g< 3r-q+2\sqrt{2r(r-q)}$, then $(\m>0 \text{ and } \laplexp(\hat\tau) +r-q> 0)$
so that using Corollary \ref{Cor_CNS_ext_ps} \ref{it:2-23}, \al{we get the result}.

 Let us now consider the case where the cells share equally their parasites between their two daughters ($\Theta \equiv 1/2$).
In this case we have $\lambda^-=-\infty$ and for $\lambda \in \R$,
$$\laplexp(\lambda)=\lambda g + 2 r \left[2^{-\lambda} -1\right],\quad \laplexp'(\lambda)= g-  2^{1-\lambda} r \ln 2,\quad \m= g -  2r\ln 2 .$$
 The minimum of $\laplexp$ on $\R$ is reached at 
$ \hat{\tau}= (\ln 2)^{-1} \ln \left( 2r \ln 2g^{-1} \right) $
and
$ \laplexp(\hat{\tau})= g \hat{\tau} + g(\ln 2)^{-1}-2r.$
Thus to have almost sure extinction of the cell population, the two following conditions must be satisfied: 
\begin{align*}
% \label{cond_sym}
 2r\ln 2<g \quad \text{and} \quad \frac{g}{r\ln 2} \left(1+\ln 2- \ln \left( \frac{g}{r\ln 2}  \right)\right)-1-\frac{q}{r} \leq 0.
\end{align*}
Let 
$$\varphi(x) = x \left(1+\ln 2- \ln \left( x  \right)\right)-1-\frac{q}{r}.$$
We are looking for the sign of $\varphi$ on $\al{(2,+\infty)}$, interval on which the first condition $\m>0$ is satisfied. On this interval, $\varphi$ is decreasing from $1-q/r>0$ to $-\infty$. Thus, there exists $x_0(q/r)>2$ such that $\varphi(x_0(q/r))=0$ and
\begin{align*}
\text{if } 2r\ln(2)<g< rx_0(q/r)\ln(2),&\quad \text{then } (\m>0\text{ and } \laplexp(\hat{\tau}) +r-q>0),\\
\text{if }g\geq  rx_0(q/r)\ln(2),&\quad \text{then } (\m>0\text{ and } \laplexp(\hat{\tau}) +r-q\leq 0).
\end{align*}
Finally, applying Corollary \ref{Cor_CNS_ext_ps}, we get
$$
\begin{array}{lll}
\text{if } & g\geq rx_0(q/r)\ln(2), & \text{ then } \lim_{t \to \infty} \E[\mathfrak{C}_t]=0,\\
\text{if }2r\ln(2)<&g<rx_0(q/r)\ln(2)\text{ or } g\leq 2r\ln 2, & \text{ then }  \lim_{t \to \infty} \E[\mathfrak{C}_t]=\infty,\\
\end{array}
$$
which yields the result.

\end{proof}

\begin{proof}[Proof of Proposition \ref{prop:inf}]

Let $\kappa$ be a partitioning kernel and $\Theta$ a random variable with distribution $\kappa$. Then, let us define $\mathfrak{m}(y)=y+2\E[\ln\Theta]$. For any $y\leq -2\E[\ln\Theta]$, $\mathfrak{m}(y)\leq 0$ so that by Corollary \ref{Cor_CNS_ext_ps}, $\lim_{t\rightarrow+\infty}\E_{\delta_x}[\mathfrak{C}_t]=\infty$. By Jensen's inequality, $-\E[\ln\Theta]>\ln 2$. Then, for $g/r<2\ln2$, we proved the result. 

For all $y>2\ln 2$, let us define $\hat{\tau}(y)$ as the solution of 
\begin{align}\label{eq:deftauy}
2\E[\ln(\Theta)\Theta^{\hat{\tau}(y)}]=-y.
\end{align}

Let $y_0>2\ln 2$ be such that 
\begin{align}\label{eq:kapparqy0}
y_0\hat{\tau}(y_0) + 2\E[\Theta^{\hat{\tau}(y_0)}]-1-q/r=0.
\end{align}
We want to prove that $y_0>~\ln(2)x_0(q/r)$, where $x_0(q/r)$ is defined in \eqref{eq:x0rq}. By definition, $x_0$ is such that $\varphi(x_0)=x_0(1+\ln(2)-\ln(x_0))-1-q/r=0$. As $\varphi$ is a decreasing function on $(2,+\infty)$ from $1-q/r$ to $-\infty$, we proved that $y_0>~\ln(2)x_0(q/r)$ if $\varphi(y_0/\ln(2))<\varphi(x_0)=0$. Therefore, we need to prove that
that $y_0/\ln(2)(1+\ln(2\ln(2))-\ln(y_0))-1-q/r<0$. Combining the latter with \eqref{eq:kapparqy0}, we need to prove that 
\begin{align*}
y_0/\ln(2)(1+\ln(2\ln(2)-\ln(y_0))-y_0\hat{\tau}(y_0) - 2\E[\Theta^{\hat{\tau}(y_0)}]<0.
\end{align*}

For $y>2\ln2$, let us define $F(y)=y/\ln(2)(1+\ln(2\ln(2)) -\ln(y))-y\hat{\tau}(y) - 2\E[\Theta^{\hat{\tau}(y)}]$. We have
\begin{align*}
F'(y) = \frac{1}{\ln(2)}(1+\ln(2\ln(2)) -\ln(y))-\frac{1}{\ln(2)}-\hat{\tau}(y)-\frac{d\hat{\tau}}{dy}(y)\left(y +  2\E[\ln(\Theta)\Theta^{\hat{\tau}(y)}]\right).
\end{align*}
By definition of $\hat{\tau}(y)$ in \eqref{eq:deftauy}, we obtain
\begin{align*}
F'(y) = \frac{\ln(2\ln(2))-\ln(y)}{\ln(2)}-\hat{\tau}(y).
\end{align*}
Next, $x\mapsto \ln(x)x^\tau$ is concave for $\tau<0$, and $x\in(0,1)$. Therefore, 
$$
\E[\ln(\Theta)\Theta^{\hat{\tau}(y)}]\leq \frac{-\ln(2)}{2^{\hat{\tau}(y)}},
$$
so that by \eqref{eq:deftauy} again, $y\geq 2\ln(2)2^{-\hat{\tau}(y)}$ and 
$$
\hat{\tau}(y)\geq\frac{\ln(2\ln(2))-\ln(y)}{\ln(2)},
$$
and $F$ is decreasing on $(2\ln(2),\infty)$. Moreover, if $X=-\hat{\tau}(2\ln(2))$,
$$
F(2\ln(2))=2+2\ln(2)X-2\E[\Theta^{-X}]\leq 2(1 +\ln(2)X-2^{X})\leq 0,
$$
where we used again Jensen's inequality. Then $F(y)\leq 0$ for all $y\geq 2\ln(2)$, and in particular, $F(y_0)<0$, which ends the proof.
\end{proof}

\begin{proof}[Proof of Proposition \ref{prop:multimodal}]
Let $\vartheta \in (0,1/2)$ and $\Theta_\vartheta$ be a random variable with distribution 
$$\kappa_\vartheta(d\theta)=\frac{1}{2}\left(\delta_\vartheta(d\theta) + \delta_{1-\vartheta}(d\theta)\right).$$
Let $\Theta$ be a symmetric random variable on $(0,1)$  with distribution $\kappa$ and with expectation $1/2$, such that
$$ \E[\min(\Theta , (1-\Theta))]=\vartheta. $$
Let
\begin{align*}
y_\vartheta^\star=\sup\left\{y\geq 0\text{ s.t. if }g/r<y,\forall x\geq 0, \lim_{t\rightarrow+\infty}\E^{(\kappa_\vartheta)}_{\delta_x}[\mathfrak{C}_t]=\infty\right\}. 
\end{align*}
and
\begin{align*}
y^\star=\sup\left\{y\geq 0\text{ s.t. if }g/r<y,\forall x\geq 0, \lim_{t\rightarrow+\infty}\E^{(\kappa)}_{\delta_x}[\mathfrak{C}_t]=\infty\right\}. 
\end{align*}
Let us define
$$\mathfrak{m}(y):=y + 2\E[\ln\Theta]= y + 2\E[\mathbf{1}_{\Theta \leq 1/2}\ln(\Theta(1-\Theta))]$$
and
$$y_0:= - 2\E[\ln(\Theta)]=- 2\E[\mathbf{1}_{\Theta \leq 1/2}\ln(\Theta(1-\Theta))],$$
where the two rewritings of the expectation are a consequence of the symmetry \al{with respect to $1/2$} of the random variable $\Theta$. 
If $y\leq y_0$, then $\mathfrak{m}(y)\leq 0$ so that $y^\star\geq y_0$ by Corollary~\ref{Cor_CNS_ext_ps}. Similarly, $y_\vartheta^\star\geq-\ln(\vartheta(1-\vartheta))$.

First, if $y_\vartheta^\star<y_0$, then , $y_\vartheta^\star<y^\star$ and the result is proved. Next, assume that $y_\vartheta^\star\geq y_0$. For all $y\geq y_0$, let $\hat{\tau}(y),\hat{\tau}_\vartheta(y)\leq 0$ be such that
\begin{align}\label{eq:deftauhatmy}
2\E\left[\Theta^{\hat{\tau}(y)}\ln\Theta\right] = -y,\quad 2\E\left[\Theta_\vartheta^{\hat{\tau}_\vartheta(y)}\ln\Theta_\vartheta\right]=-y.
\end{align}  
Then, by definition of $y^\star$, according to Corollary \ref{Cor_CNS_ext_ps}, 
\begin{align}\label{eq:dm}
y^\star\hat{\tau}(y^\star)+2\E[\mathbf{1}_{\Theta \leq 1/2}(\Theta^{\hat{\tau}(y^\star)} + (1-\Theta)^{\hat{\tau}(y^\star)})] = 1+q/r.
\end{align}
Next, for all $y\geq y_0$, let $\varphi_\vartheta(y)=y\hat{\tau}_{\vartheta}(y) + \vartheta^{\hat{\tau}_{\vartheta}(y)} + (1-\vartheta)^{\hat{\tau}_{\vartheta}(y)}-1-q/r.$ Using \eqref{eq:deftauhatmy}, we obtain that $\varphi_\vartheta'(y)=\hat{\tau}_\vartheta(y)<0$, so that $\varphi_\vartheta$ is decreasing on $[y_0,+\infty)$. Moreover, $\varphi_\vartheta(y_\vartheta^\star)=0$. Therefore, to show that $y^\star\geq y_\vartheta^\star$, we need to prove that $\varphi_\vartheta(y^\star)\leq 0$. Combining the definition of $\varphi_\vartheta$ with \eqref{eq:dm}, we obtain
\begin{align*}
\varphi_\vartheta(y^\star) & = y^\star\hat{\tau}_{\vartheta}(y^\star) + \vartheta^{\hat{\tau}_{\vartheta}(y^\star)} + (1-\vartheta)^{\hat{\tau}_{\vartheta}(y^\star)}-y^\star\hat{\tau}(y^\star)-2 \E[\mathbf{1}_{\Theta \leq 1/2} (\Theta^{\hat{\tau}(y^\star)}+(1-\Theta)^{\hat{\tau}(y^\star)} )]
\\
& = \varphi_\vartheta(y^\star) - \varphi(y^\star),
\end{align*}
where for all $y\geq  y_0$, $\varphi(y) = y\hat{\tau}(y)+2\E[\mathbf{1}_{\Theta \leq 1/2} (\Theta^{\hat{\tau}(y)}+(1-\Theta)^{\hat{\tau}(y)} )]- 1-q/r$.

To prove that $\varphi_\vartheta(y^\star)$ is negative, let us define $F(y)=\varphi_\vartheta(y) - \varphi(y)$,  for all $y\geq y_0$. Using \eqref{eq:deftauhatmy}, we have $F'(y)=\hat{\tau}_\vartheta(y) -\hat{\tau}(y)$. To find the sign of this latter, we study the convexity of $\varphi_1:z\in (0,1)\mapsto \ln(z) z^\tau +\ln(1-z)(1-z)^\tau$, for any $\tau<0$. We have
\begin{align*}
&\varphi_1'(z) = z^{\tau-1}\left(1 + \tau\ln(z)\right) -(1-z)^{\tau-1}\left(1 + \tau\ln(1-z)\right)\\
&\varphi_1''(z) = z^{\tau-2}\left((\tau-1)\left(1 + \tau\ln(z)\right)+\tau\right) + (1-z)^{\tau-2}\left((\tau-1)\left(1 + \tau\ln(1-z)\right)+\tau\right)<0,
\end{align*}
as $\tau<0$ and $z\in(0,1)$. Therefore, $\varphi_1$ is concave on $(0,1)$. Then, for all $y\geq y_0$, by Jensen's inequality we obtain
$$
2\E[\mathbf{1}_{\Theta \leq 1/2} (\ln \Theta\Theta^{\hat{\tau}(y)}+\ln (1-\Theta)(1-\Theta)^{\hat{\tau}(y)} )]\leq \ln(\vartheta)\vartheta^{\hat{\tau}(y)} +\ln(1-\vartheta)(1-\vartheta)^{\hat{\tau}(y)},
$$
where we used that $\vartheta =2\E[\mathbf{1}_{\Theta \leq 1/2} \Theta] $. Combining the last inequality with \eqref{eq:deftauhatmy}, we have
$$
\ln(\vartheta)\vartheta^{\hat{\tau}_\vartheta(y)} + \ln(1-\vartheta)(1-\vartheta)^{\hat{\tau}_\vartheta(y)}\leq \ln(\vartheta)\vartheta^{\hat{\tau}(y)} +\ln(1-\vartheta)(1-\vartheta)^{\hat{\tau}(y)}.
$$
As $\tau\mapsto \ln(z)z^\tau +\ln(1-z)(1-z)^\tau$ is non-decreasing on $\mathbb{R}_-$, we get that 
\begin{align}\label{eq:taumtautheta}
\hat{\tau}_\vartheta(y)\leq \hat{\tau}(y),\ \text{for all }y\geq y_0.
\end{align}
Therefore, $F$ is non-increasing on $[y_0,\infty)$. Finally, as $F(y^\star)=\varphi_\vartheta(y^\star)\leq F(y_0)$, we will now prove that $F(y_0)\leq 0$.  We have 
$$
F(y_0) = y_0\left(\hat{\tau}_{\vartheta}(y_0)-\hat{\tau}(y_0)\right) + \vartheta^{\hat{\tau}_{\vartheta}(y_0)} + (1-\vartheta)^{\hat{\tau}_{\vartheta}(y_0)}-2\E[\mathbf{1}_{\Theta \leq 1/2} (\Theta^{\hat{\tau}(y_0)}+(1-\Theta)^{\hat{\tau}(y_0)} )].
$$
As $z\mapsto z^\tau + (1-z)^\tau$ is convex on $(0,1)$ for any $\tau<0$, we have by Jensen's inequality
$$
2\E[\mathbf{1}_{\Theta \leq 1/2} (\Theta^{\hat{\tau}(y_0)}+(1-\Theta)^{\hat{\tau}(y_0)} )]\geq \vartheta^{\hat{\tau}(y_0)} + (1-\vartheta)^{\hat{\tau}(y_0)}.
$$
Then,
$$
F(y_0)\leq y_0\left(\tau_\vartheta - \tau\right) + \psi(\tau_\vartheta) - \psi(\tau),
$$
where $\tau_\vartheta = \hat{\tau}_{\vartheta}(y_0)$, $\tau = \hat{\tau}(y_0)$, and $\psi(\tau)= \vartheta^\tau +  (1-\vartheta)^\tau$. Recall that $\tau_\vartheta\leq \tau$ according to \eqref{eq:taumtautheta}. By Taylor formula with integral remainder, we have
$$
\psi(\tau)-\psi(\tau_\vartheta) = \psi'(\tau_\vartheta)(\tau - \tau_\vartheta) + \int_{\tau_\vartheta}^{\tau}\psi''(z)(\tau - z)dz.
$$ 
First, note that $\psi'(\tau_\vartheta) = \ln\vartheta\vartheta^{\tau_\vartheta} + \ln(1-\vartheta)(1-\vartheta)^{\tau_\vartheta} = -y_0$ according to \eqref{eq:deftauhatmy}. Then, 
$$
F(y_0) \leq -\int_{\tau_\vartheta}^{\tau}\psi''(z)(\tau - z)dz\leq 0,
$$
which ends the proof.
\end{proof}

\begin{proof}[Proof of Proposition \ref{prop_bigg}]
Let us first assume that 
$$ g \leq -  r \ln \mintheta(1-\mintheta) $$
In this case, we choose $p_1=0, p_2 = 1/2$ and $z_2 = \mintheta$. This choice entails
$$ \m = g +  r \ln \mintheta(1-\mintheta)  \leq 0  $$
and we conclude by an application of Corollary \ref{Cor_CNS_ext_ps}.

Let us now assume that 
$$ g > -  r \ln \mintheta(1-\mintheta). $$
In this case, we choose 
$$z_1 \in (0,\mintheta/4), \ p_1= z_1^{(\ln \ln (1/z_1))^{-1}} , p_2 = 1/2-z_1^{(\ln \ln (1/z_1))^{-1}}$$ and 
$$z_2 = \frac{\mintheta-2 z_1^{1+(\ln \ln (1/z_1))^{-1}}}{1-2 z_1^{(\ln \ln (1/z_1))^{-1}}}$$
This choice entails
$$  \E\left[\min(\Theta, 1-\Theta)\right]=\mintheta $$
and
\begin{multline*} \m(z_1) = g +  2r z_1^{(\ln \ln (1/z_1))^{-1}}\ln (z_1(1-z_1))  \\ +2r \left( \frac{1}{2}- z_1^{(\ln \ln (1/z_1))^{-1}} \right) \ln\left( \frac{\mintheta-2 z_1^{1+(\ln \ln (1/z_1))^{-1}}}{1-2 z_1^{(\ln \ln (1/z_1))^{-1}}}\frac{1-\mintheta-(1-z_1)2 z_1^{(\ln \ln (1/z_1))^{-1}}}{1-2 z_1^{(\ln \ln (1/z_1))^{-1}}} \right) \end{multline*}
where we indicated explicitely the dependence on $z_1$ for the sake of readability. We will make $z_1$ tend to $0$ and prove that for $z_1$ small enough the distribution of $\Theta$ meets the required properties. First we notice that
$$ \lim_{z_1 \to 0}  z_1^{(\ln \ln (1/z_1))^{-1}}\ln (z_1 (1-z_1))=0 $$
and
\begin{align*}
 \lim_{z_1 \to 0}&  \left( \frac{1}{2}- z_1^{(\ln \ln (1/z_1))^{-1}} \right) \ln\left( \frac{\mintheta-2 z_1^{1+(\ln \ln (1/z_1))^{-1}}}{1-2 z_1^{(\ln \ln (1/z_1))^{-1}}} \frac{1-\mintheta-(1-z_1)2 z_1^{(\ln \ln (1/z_1))^{-1}}}{1-2 z_1^{(\ln \ln (1/z_1))^{-1}}} \right)\\
& =\frac{\ln \mintheta(1-\mintheta)}{2}.
\end{align*}
We deduce that 
$$ \lim_{z_1 \to 0}\m(z_1)= g +  r \ln \mintheta(1-\mintheta)>0 $$
and thus there exists $\mathfrak{z}>0$ such that for all $z_1<\mathfrak{z}$, $\m(z_1)>0$. Therefore, according to Corollary \ref{Cor_CNS_ext_ps}, to prove the result, we need to prove that there exists $z_1<\mathfrak{z}$ such that $\kapparq(z_1)>0$, \al{where $\kapparq(z_1)$ is the constant defined in \eqref{eq:kapparq} in the case $\kappa(d\theta)=\sum_{i=1}^2\left(\delta_{z_i}(d\theta)+\delta_{1-z_i}(d\theta)\right)p_i$ with $z_1,z_2, p_1,p_2$ defined above}. 

First, we know that for all $z_1<\mathfrak{z}$, as $\m(z_1)>0$, the argument of the minimum of $\phi$ is negative, {\it i.e.} there exists $\tau(z_1)<0$ such that
\begin{align}\label{eq:tauz1}
g= -2r \E[\Theta^{\tau(z_1)}\ln \Theta].
\end{align}

We now prove by a reductio ad absurdum that for all $z_1<\mathfrak{z}\wedge e^{-g/r}$,
\begin{equation} \label{small_tau} \tau(z_1)> - \frac{1}{\ln \ln (1/z_1)}. \end{equation}
Let $z_1<\mathfrak{z}\wedge e^{-g/r}$. If $\tau(z_1)\leq - 1/\ln \ln (1/z_1)$, we have
\begin{align*} 
\frac{g}{r}&\geq p_1 z_1^{\tau(z_1)}\ln (1/z_1)  = z_1^{(\ln \ln (1/z_1))^{-1}}z_1^{\tau(z_1)}\ln (1/z_1)\\
&  \geq z_1^{(\ln \ln (1/z_1))^{-1}}z_1^{- (\ln \ln (1/z_1))^{-1}}\ln (1/z_1)=\ln (1/z_1),
\end{align*}
so that $z_1\geq e^{-g/r}$ which is absurd.
We deduce that \eqref{small_tau} holds for all $z_1<\mathfrak{z}\wedge e^{-g/r}$, and as $\tau(z_1)<0$, we obtain that $\tau(z_1)$ goes to $0$ when $z_1$ goes to $0$.

Finally, with our choice of parameters we have
$$ \kapparq(z_1) = \tau(z_1) g +2r \left( p_1\left(z_1^{\tau(z_1)}+(1-z_1)^{\tau(z_1)}\right)+p_2\left(z_2^{\tau(z_1)}+(1-z_2)^{\tau(z_1)}\right) \right)-r-q, $$
and
$$ \lim_{z_1 \to 0}\tau(z_1)= \lim_{z_1 \to 0}p_1=0, \quad \lim_{z_1 \to 0}(1-z_1)^{\tau(z_1)}=1, \quad \lim_{z_1 \to 0}p_2=\frac{1}{2} \quad \text{and} \quad \lim_{z_1 \to 0} z_2 = \mintheta.  $$
Moreover, by \eqref{eq:tauz1},
$$
\lim_{z_1\to 0}z_1^{\tau(z_1)}\ln(z_1)p_1 = -\frac{g}{2r} + \frac{1}{2}\ln(\vartheta(1-\vartheta)),
$$
so that $\lim_{z_1\to 0}z_1^{\tau(z_1)}p_1 = 0$. Finally,
$$  \lim_{z_1 \to 0} \kapparq(z_1)=r-q>0. $$
This ends the proof.
\end{proof}
\subsection{Proof of Section \ref{section_beta}}
We now turn to the proof of the results on the asymptotic behaviour of the number of parasites in the cells. Hence, we consider 
that the dynamics of the parasites follows \eqref{X_sans_sauts_stables}.
\begin{proof}[Proof of Proposition \ref{prop:beta_ct:g_var}]
From Section \ref{sec:MTO}, we know that the auxiliary process $Y$ is the unique strong solution to the SDE
\begin{align*}
Y_t= x& + \int_0^tg(Y_s)ds+ \int_0^t \sqrt{2 \sigma^2(Y_s) }dB_s+ \int_0^t \int_0^{p(Y_{s^-})}
\int_{\mathbb{R}_+}z\widetilde{Q}(ds,dx,dz)\\
&+\int_0^t \int_0^1  (\Theta(Y_{s^-},\zeta)-1)Y_{s^-}N(ds,d\zeta), \nonumber
\end{align*}
where $N$ is as in \eqref{X_sans_sauts4}.
Let us begin with the proof of point \ref{it:2-31}. Note that as $g(x)/x+2r\E[\ln\Theta(x)]<-\eta$ for all $x>0$, \ref{SNinfinity} is satisfied. We plan to apply (6.3) of \cite[Theorem 6.2]{companion}. This result still holds with $\Theta(x)$ instead of $\Theta$. Indeed the proof of this result \cha{requires} two properties on the partitioning kernel. First, we need that $Y_t\exp(-\int_0^t g(Y_s)/Y_sds- \int_0^t \int_0^1\ln \Theta(Y_{s^-},\zeta)N(ds,d\zeta))$ is a local martingale, which still holds. Second, we need a (possibly stochastic) lower bound on the proportion $\Theta(x)$ of the number of parasites that goes to one of the daughter cells at division, uniform in $x$. This is ensured by our assumptions.
Thus, from (6.3) of \cite[Theorem 6.2]{companion}, we have
\begin{equation*} \lim_{t\rightarrow +\infty} Y_t = 0 \quad \text{almost surely,} 
\end{equation*}
and combining \eqref{eq:mto} with the fact that $\E_{\delta_x}\left[N_t\right]=e^{(r-q) t}$, we obtain that 
\begin{equation*} 
\E_{\delta_x} \left[ \frac{\sum_{u \in V_t} \mathbf{1}_{\{X_t^u >\eps\}} }{e^{(r-q) t}}\right] \xrightarrow[t\rightarrow \infty]{} 0.
\end{equation*}
Moreover, the fact that $(N_t,t\geq 0)$ is a birth and death process with individual death rate $q$ and individual birth 
rate $r$ also entails that $N_te^{-(r-q) t}$ converges in probability to an exponential random 
variable with parameter $1$ on the event of survival, when $t$ goes to infinity. Hence, we have
\begin{align*}
\mathbf{1}_{\{N_t \geq 1\}}\frac{\sum_{u \in V_t} \mathbf{1}_{\{X_t^{u} >\eps\}} }{N_t} = \frac{\sum_{u \in V_t} \mathbf{1}_{\{X_t^{u} >\eps\}} }{e^{(r-q) t}}\times \frac{\mathbf{1}_{\{N_t \geq 1\}}}{N_te^{-(r-q) t}}\to 0 \quad \text{in probability}, \quad \cha{(t\rightarrow \infty)}.
\end{align*}
It ends the proof of point \ref{it:2-31}.\\

We now prove point \ref{it:3-31}. Applying again (6.3) of \cite[Theorem 6.2]{companion} to $Y$, we obtain that
$$\mathbb{P}\left( Y_t\neq0\right)\rightarrow 0,\quad (t\rightarrow \infty).$$
From this, similarly as for the proof of point {\it ii)} we obtain that
\begin{equation*} \label{conv_proba1}  \mathbf{1}_{\{N_t\geq 1\}}\frac{\sum_{u \in V_t} \mathbf{1}_{\{X_t^u >0\}} }{N_t} \to 0 \quad \text{in probability}, \quad (t\rightarrow \infty). \end{equation*}

To end the proof of point \ref{it:3-31}, we need to prove that the aforementioned convergence holds almost surely. 
We cannot follow directly the proof of \cite[Theorem 4.2(i)]{BT11} because their Lemma 4.3 concerns Yule processes and does not hold when we take into account the death of cells. However, we can 
prove a result similar to this lemma (see Lemma \ref{lemtechnq} in the Appendix) which is sufficient to get our result.
Except from this lemma the proof is exactly the same and we thus refer to \cite{BT11} for details of the proof.\\

We end with the proof of point \ref{it:1-31}.
Applying  \cite[Corollary 6.4.{\it iii)}]{companion} to $Y$, we obtain that
\begin{equation}\label{Wpos} \liminf_{t \to \infty} Y_t e^{-\Lambda_t} = W, \end{equation}
with $\P(W>0)>0$ and where $\Lambda$ is defined by
$$ \Lambda_t:= \int_0^t \frac{g(Y_s)}{Y_s}ds + \int_0^t \int_0^1  \ln \Theta(Y_{s^-},\zeta)N(ds,d\zeta), $$
where $N$ is PPM on $\R_+\times[0,1]$ with intensity $2rds\otimes d\zeta$.
Notice that $\Lambda$ may be rewritten as
\begin{align*}
\Lambda_t &= \int_0^t\left( \frac{g(Y_s)}{Y_s} + 2r\E \left[  \ln \Theta(Y_{s^-}) \right] \right)ds + \cha{\int_0^t\int_0^1  \left( \ln \Theta(Y_{s^-},\zeta)N(ds,d\zeta)- 2r\E \left[  \ln\Theta(Y_{s^-}) \right]dsd\zeta\right)}\\
&=: \int_0^t\left( \frac{g(Y_s)}{Y_s} + 2r\E \left[  \ln \Theta(Y_{s^-}) \right] \right)ds + \mathscr{M}_t, 
\end{align*}
where $\mathscr{M}$ is a martingale, as by assumption it has a finite variance.
To be more precise, we have
$$ Var (\mathscr{M}_t)= 2r \int_0^t \int_0^1 \E\left[   \ln^2 \Theta(Y_{s^-},\zeta) \right]dsd\zeta \leq 2r \sup_{x \geq 0} \E\left[   \ln^2 \Theta(x) \right] t= Ct, $$
where $C$ is a finite constant under the assumptions of point $i)$. Hence for $\eps>0$,
$$ \lim_{t \to \infty} \{\eps t + \mathscr{M}_t \} = \infty,\text{ almost surely,} $$
which implies
\begin{equation}\label{prod_pos} \Lambda_t - (\eta-\eps)t \geq \eps t + \mathscr{M}_t \underset{t \to \infty}{\to} \infty\text{ almost surely}. \end{equation}
We thus get
$$ \liminf_{t \to \infty} \P_x\left(Y_t e^{-(\eta-\eps)t}>0 \right)\geq \P_x\left(\liminf_{t \to \infty}Y_t e^{-(\eta-\eps)t}>0 \right)=  \P_x\left(\liminf_{t \to \infty}Y_te^{-\Lambda_t}e^{\Lambda_{t}-(\eta-\eps)t}>0 \right)>0, $$
where we used Fatou's Lemma, \eqref{Wpos} and \eqref{prod_pos}.
Hence, using \eqref{eq:mto} we obtain
$$\liminf_{t \to \infty} \E_{\delta_x}\left[ \frac{\sum_{u \in V_t} \mathbf{1}_{\{X_t^u e^{-(\eta-\eps)t}>0 \}} }{e^{(r-q) t}}\right]>0. $$
Now notice that the Cauchy-Schwarz inequality yields
\begin{align*}
 \E^2_{\delta_x}\left[  \frac{\sum_{u \in V_t} \mathbf{1}_{\{X_t^u e^{-(\eta-\eps)t}>0 \}} }{e^{(r-q) t}}\right]&\leq 
 \E_{\delta_x}\left[ \mathbf{1}_{\{N_t \geq 1\}}\left(\frac{\sum_{u \in V_t} \mathbf{1}_{\{X_t^u e^{-(\eta-\eps)t}>0 \}} }{N_t}\right)^2\right]
 \E_{\delta_x}\left[\left(\frac{N_t}{e^{(r-q) t}}\right)^2\right]\\
&  \leq \E_{\delta_x}\left[ \mathbf{1}_{\{N_t \geq 1\}}\frac{\sum_{u \in V_t} \mathbf{1}_{\{X_t^u e^{-(\eta-\eps)t}>0 \}} }{N_t}\right]
 \E_{\delta_x}\left[ \left(\frac{N_t}{e^{(r-q) t}}\right)^2\right],
\end{align*}
where the last inequality comes from the fact that the term in the first expectation in the right-hand side is smaller 
than one. 
The last expectation converges to $C:=1+(r+q)/(r-q)$ as $t$ goes to infinity (see Lemma 5.3 in \cite{marguet2022spread} in the case $\alpha=0$). Hence we get
\begin{align*}
0<C^{-1}\liminf_{t \to \infty} \E^2_{\delta_x}\left[ \frac{\sum_{u \in V_t} \mathbf{1}_{\{X_t^u e^{-(\eta-\eps)t}>0 \}} }{e^{(r-q) t}}\right]\leq 
\liminf_{t \to \infty} \E_{\delta_x}\left[\mathbf{1}_{\{N_t \geq 1\}} \frac{\sum_{u \in V_t} \mathbf{1}_{\{X_t^u e^{-(\eta-\eps)t}>0 \}} }{N_t}\right],
\end{align*}
and it ends the proof of point \ref{it:1-31}.\end{proof}

\appendix

\section{Existence and unicity of the host-parasite measure-valued process} \label{append_unic_exi}

\cha{This section is dedicated to the construction of the host-parasite measure-valued process $Z$ as the unique strong solution of a SDE.}

\cha{Recall the notation introduced in Section \ref{host-para-Z} and} 
let $\left(\Phi^u(x,s,t),u\in\mathcal{U},x\in\overline{\mathbb{\R}}_+, s\leq t\right)$ be a family of independent stochastic flows satisfying \eqref{X_sans_sauts} describing the individual-based dynamics.
Let $E = \mathcal{U}\times(0,1)\times \overline{\mathbb{R}}_+$  and 
$M(ds,du,d\zeta,dz)$ be a PPM on $\mathbb{R}_+\times E$ with intensity 
$ds\otimes n(du)\otimes d\zeta\otimes dz$, where $n(du)$ denotes the counting measure on $\mathcal{U}$. 
We assume that $M$ and $\left(\Phi^u,u\in\mathcal{U}\right)$ are independent. We denote by $\mathcal{F}_t$ the filtration generated by the restriction of the PPM $M$ to $[0,t]\times E$ and 
the family of stochastic processes $(\Phi^u(x,s, t), u\in\mathcal{U},x\in\overline{\mathbb{R}}_+,s\leq t)$ up to time $t$. \\

We now introduce assumptions to ensure the strong existence and uniqueness of the process. They are 
simpler than those of the companion paper \cite{marguet2022spread} because the \al{cell division rate} does not depend on the number of parasites they carry.

\begin{customass}{\bf{EU}}\label{ass_A}
\begin{enumerate}[label=\roman*)]
\item The function $p$ is locally Lipschitz on $\R_+$, non-decreasing and $p(0)= 0$. The function $g$ is continuous on $\R_+$, $g(0)=0$ and for any $n \in \N$ there exists a finite constant $B_n$ such that for any $0 \leq x \leq y \leq n$
\begin{align*} |g(y)-g(x)|
\leq B_n \phi(y-x),
\ 
\text{ where }\ 
\phi(x) = \left\lbrace\begin{array}{ll}
x \left(1-\ln x\right) & \textrm{if } x\leq 1,\\
1 & \textrm{if } x>1.\\
\end{array}\right.
\end{align*}
\item The function $\sigma$ is H\"older continuous with index $1/2$ on compact sets and $\sigma(0)=0$. 
\item The measure $\pi$ satisfies
$$ \int_0^\infty \left(z \wedge z^2\right)\pi(dz)<\infty. $$
\end{enumerate}
\end{customass}

Recall the definition of $\mathcal{G}$ in \eqref{def_gene}.
Then, the structured population process may be defined as the strong solution to a SDE. 

\begin{prop} \label{pro_exi_uni}
Under Assumption \ref{ass_A}, there exists a strongly unique $\mathcal{F}_t$-adapted 
c\`adl\`ag process $(Z_t,t\geq 0)$ taking values in $\mathcal{M}_P(\overline{\mathbb{R}}_+)$ such that for all $f\in C_0^2(\overline{\mathbb{R}}_+)$ and $x_0,t\geq 0$, 
\begin{align*}\nonumber
\langle Z_{t},f\rangle = &f\left(x_{0}\right)+\int_{0}^{t}\int_{\mathbb{R}_+}\mathcal{G}f(x)Z_{s}\left(dx\right)ds+M_{t}^f(x_0)&\\\nonumber
 +\int_{0}^{t}\int_{E}&\mathbf{1}_{\left\{u\in V_{s^{-}}\right\}}\left(\mathbf{1}_{\left\{z\leq r\right\}}\left(f\left(\Theta(X_{s^-}^u, \zeta) X_{s^-}^u \right)+ f\left((1-\Theta(X_{s^-}^u, \zeta)) X_{s^-}^u \right)-
f\left(X_{s^{-}}^{u}\right)\right)\right.\\
& \left.-\mathbf{1}_{\left\{0<z-r\leq q(X_{s^{-}}^u)\right\}}
f\left(X_{s^{-}}^{u}\right)\right) M\left(ds,du,d\zeta,dz\right),
\end{align*}
where for all $x\geq 0$, $M_{t}^f(x)$ is a $\mathcal{F}_t$-martingale.
\end{prop}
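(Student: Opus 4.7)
The plan is to first establish strong existence and pathwise uniqueness of the single-cell dynamics $\Phi(x,s,\cdot)$ solving \eqref{X_sans_sauts}, and then to build the measure-valued process $Z$ cell by cell from the independent flows $(\Phi^u)_{u\in\mathcal{U}}$ and the PPM $M$ governing divisions and deaths.

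For the single-cell SDE \eqref{X_sans_sauts}, strong existence and pathwise uniqueness of a nonnegative solution under Assumption~\ref{ass_A} follow from a Yamada--Watanabe type argument adapted to jump SDEs, in the spirit of Fu--Li and Dawson--Li. The H\"older-$1/2$ regularity of $\sigma$ together with $\sigma(0)=0$ handles the diffusion part, the log-Lipschitz modulus $\phi$ handles the drift via a Bihari--LaSalle inequality, the locally Lipschitz non-decreasing intensity $p$ gives standard uniqueness estimates for the compensated jump term, and the state-dependent stable-like jumps with intensity $x\rho(dz)$ are treated by the techniques developed for stable continuous-state branching processes. Nonnegativity is automatic since $g(0)=\sigma(0)=p(0)=0$ and the jumps driven by $\widetilde Q$ and $R$ are nonnegative, and $+\infty$ is absorbing by construction.

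Given this family of flows, I would construct $Z$ inductively on the successive jump times $0=\tau_0<\tau_1<\ldots$ of the restriction of $M$ to atoms $(s,u,\zeta,z)$ with $u\in V_{s^-}$ and $z\leq r+q(X^u_{s^-})$. Set $Z_0=\delta_{x_0}$, and on each interval $[\tau_n,\tau_{n+1})$ let every alive cell $u\in V_{\tau_n}$ follow its own flow $\Phi^u(X^u_{\tau_n},\tau_n,\cdot)$. At time $\tau_{n+1}$, with atom $(\tau_{n+1},u,\zeta,z)$: if $z\leq r$, replace $u$ by two daughters $u0,u1$ carrying respectively $\Theta(X^u_{\tau_{n+1}^-},\zeta)X^u_{\tau_{n+1}^-}$ and $(1-\Theta(X^u_{\tau_{n+1}^-},\zeta))X^u_{\tau_{n+1}^-}$; if $r<z\leq r+q(X^u_{\tau_{n+1}^-})$, remove $u$ from $V_{\tau_{n+1}}$. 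That the process so obtained solves the SDE of the proposition is then checked by applying It\^o's formula to $f(X^u_\cdot)$ on each excursion between jump times of $M$, which produces the $\mathcal{G}$-term and the martingale $M_t^f(x_0)$, and by rewriting the sum over division/death events as the stochastic integral against $M$ displayed in the statement. Strong uniqueness is inherited from the strong uniqueness of each flow $\Phi^u$ together with the fact that the whole construction is a measurable functional of $M$ and $(\Phi^u)_{u\in\mathcal{U}}$.

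The main obstacle, and the reason why the constancy of the division rate $r$ is crucial, is to rule out explosion of the branching structure, i.e.\ to prove $\tau_n \to \infty$ a.s. Since each division creates exactly one extra cell and deaths can only decrease the population, $\#V_t$ is stochastically dominated by a Yule process of rate $r$ started from $1$, and is therefore almost surely finite on every compact time interval. Combined with the per-cell finiteness of the death rate seen along the Yule skeleton, this forces only finitely many atoms of $M$ to affect the population on $[0,t]$, making the inductive construction globally well-defined. The presence of cells carrying an infinite quantity of parasites causes no additional difficulty, since $+\infty$ is absorbing for each individual flow and is simply transmitted to both daughters at division.
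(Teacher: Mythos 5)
Your proposal is correct and follows essentially the same route as the paper, which does not write out a proof but simply combines \cite[Proposition 1]{palau2018branching} (well-posedness of the single-cell SDE \eqref{X_sans_sauts} via Yamada--Watanabe/Fu--Li-type arguments under Assumption \ref{ass_A}) with \cite[Theorem 2.1]{marguet2016uniform} (the inductive construction of the measure-valued process from the independent flows and the PPM $M$, with non-explosion of the genealogy obtained from domination of $\#V_t$ by a rate-$r$ Yule process). Your sketch reproduces exactly the content of that combination, including the correct treatment of the absorbing state $+\infty$.
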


The proof is a combination of \cite[Proposition 1]{palau2018branching} and \cite[Theorem 2.1]{marguet2016uniform} (see  \cite[Appendix A]{marguet2022spread} for details).

\section{Technical lemma for the proof of Proposition \ref{prop:beta_ct:g_var}\ref{it:3-31}} \label{App_lemtechnq}

This appendix is dedicated to the statement and proof of a lemma, which is a slightly weaker version of \cite[Lemma 4.3]{BT11}. The only difference
is that they considered a Yule process instead of a birth and death process, and that the finite sets $I$ and $J$ could be arbitrary, whereas we 
impose the condition $J \subset I$.
The statement and proof are deliberately very close to that of \cite[Lemma 4.3]{BT11}. We give the proof in its entirety for the sake of readability.

\begin{lemma} \label{lemtechnq}Let $V$ be a denumerable subset and
 $(N_t(i) : t\geq 0)$  be i.i.d. birth and death processes with birth and death rates $r$ and $q< r$ for $i\in V$. Then there
exist $\delta>0$ and a nonnegative nonincreasing function  $G$  on $\R_+$
such that $G(y)\rightarrow 0$ as $y\rightarrow \infty$ and  for all $J\subset I$ finite subsets of $V$ and $x\geq 0$:
\begin{equation*}
\mathfrak{P}(x,\#J,\#I):=\P\Big(\sup_{t\geq 0} \mathbf{1}_{\{\sum_{i\in I} N_t(i)>0\}} \frac{\sum_{i\in J} N_t(i)}{\sum_{i\in I} N_t(i)} \geq x\Big)\leq G\left(\frac{\#I}{\#J}  x\right)+e^{-2\delta\#I}.
\end{equation*}
\end{lemma}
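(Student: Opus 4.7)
The plan is to exploit the fact that $J\subset I$ together with classical martingale theory for supercritical branching processes. First, I set $\mu:=r-q>0$ and, with $N_0(i)=1$, introduce the non-negative càdlàg martingales $M^i_t:=N_t(i)e^{-\mu t}$; each is $L^2$-bounded and converges almost surely and in $L^2$ to a limit $W^i$ with $\E[W^i]=1$, and the $W^i$ are i.i.d. For a finite $A\subseteq V$ I write $M^A_t:=\sum_{i\in A}M^i_t$ and $W^A:=\sum_{i\in A}W^i$; setting $K:=I\setminus J$ makes $M^J$ and $M^K$ independent, with $M^J_0=\#J$ and $M^I_0=\#I$.

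Since $e^{-\mu t}$ cancels in the ratio $S^J_t/S^I_t=M^J_t/M^I_t$, I use the elementary split
\begin{align*}
\Big\{\sup_{t\geq 0}\tfrac{M^J_t}{M^I_t}\mathbf{1}_{\{M^I_t>0\}}\geq x\Big\}\subseteq \Big\{\sup_{t\geq 0}M^J_t\geq x\beta\Big\}\cup\Big\{\inf_{t:\,M^I_t>0}M^I_t\leq \beta\Big\},
\end{align*}
valid for every $\beta>0$. Taking $\beta:=\#I/4$ and applying Doob's maximal inequality to the non-negative martingale $M^J$ yields
\begin{align*}
\P\Big(\sup_{t\geq 0}M^J_t\geq x\beta\Big)\leq \frac{\#J}{x\beta}=\frac{4}{y},\qquad y:=\tfrac{\#I}{\#J}x,
\end{align*}
so this first term is dominated by $G(y)$ for $G(y):=(4/y)\wedge 1$, which is non-increasing on $\R_+$ and vanishes at infinity.

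The hard part will be controlling the infimum term. Let $\tau:=\inf\{t\geq 0:M^I_t\leq \#I/4\}$. By $L^2$-boundedness, $M^I$ is uniformly integrable, so optional stopping gives $M^I_\tau=\E[W^I\mid \mathcal{F}_\tau]$ on $\{\tau<\infty\}$; moreover, since downward jumps of $M^I$ have size $e^{-\mu t}\leq 1$, one checks that $M^I_\tau\leq \#I/4$ and $M^I_\tau>0$ on $\{\tau<\infty\}$ as soon as $\#I$ is large enough. Hence
\begin{align*}
\E\bigl[W^I\mathbf{1}_{\{\tau<\infty\}}\bigr]=\E\bigl[M^I_\tau\mathbf{1}_{\{\tau<\infty\}}\bigr]\leq \tfrac{\#I}{4}\P(\tau<\infty),
\end{align*}
while $\E[W^I\mathbf{1}_{\{\tau<\infty\}}]\geq (\#I/2)\P(\tau<\infty,\,W^I>\#I/2)$; rearranging yields $\P(\tau<\infty)\leq 2\,\P(W^I\leq \#I/2)$. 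Since each $W^i$ vanishes with probability $q/r$ and is exponentially distributed on survival, $W^i$ has exponential moments in a neighbourhood of $0$; Cramér's large-deviation principle applied to the i.i.d.\ sum $W^I$ then gives $\P(W^I\leq \#I/2)\leq e^{-\kappa\#I}$ for some $\kappa=\kappa(r,q)>0$, and I take $2\delta:=\kappa$. Once the optional-stopping identity is in place, the rest of the infimum bound reduces to this standard large-deviation estimate; the handful of small-$\#I$ exceptions can be absorbed by adjusting the constant in $G$.
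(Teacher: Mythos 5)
Your argument is essentially correct but follows a genuinely different route from the paper. The paper bounds the ratio by $\sum_{i\in J}M(i)/\sum_{i\in I}m(i)$ with $M(i)=\sup_t N_t(i)e^{-(r-q)t}$ and $m(i)=\inf_t N_t(i)e^{-(r-q)t}$, obtains $G$ non-constructively from the uniform tightness given by the law of large numbers on the event $\{\#V_\infty>\eps\#I\}$, and gets the exponential term from Hoeffding's inequality applied to the Bernoulli survival indicators. You instead split on the level $\beta=\#I/4$ for the martingale $M^I$, which buys an explicit $G(y)=(4/y)\wedge 1$ via Doob's maximal inequality for $M^J$, and you control the downcrossing event by optional stopping plus a Chern\'er--Cram\'er lower-tail bound for $W^I=\sum_{i\in I}W^i$; note that for the lower tail only $\E[e^{-\lambda W}]$ at $\lambda>0$ is needed, so no exponential-moment discussion is required, and $M^I_\tau\le\#I/4$ already follows from right-continuity at the d\'ebut of a closed set. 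The inclusion $J\subset I$ is used in both proofs, in yours to ensure $M^J\le M^I$ so that one of the two events in your split must occur.

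The one genuine loose end is the final bookkeeping: you obtain $\P(\tau<\infty)\le 2\P(W^I\le\#I/2)\le 2e^{-\kappa\#I}$, and the statement requires $e^{-2\delta\#I}$. If $\kappa\le\ln 2$ (which can happen when $q/r$ is close to $1$, since $\P(\tau<\infty)\ge(q/r)^{\#I}$ caps the achievable rate), the factor $2$ cannot be absorbed for small $\#I$ by ``adjusting the constant in $G$'': for fixed small $\#I$ the deficit $2e^{-\kappa\#I}-e^{-2\delta\#I}$ is a positive constant independent of $y$, while $G(y)\to0$. The correct fix is to shrink $\delta$, not to modify $G$: show that $\P(\tau<\infty)$ is bounded away from $1$ uniformly in $\#I$, e.g. from $\E[W^I\mathbf{1}_{\{\tau=\infty\}}]\ge\#I-\tfrac{\#I}{4}$ together with the Cauchy--Schwarz inequality and $\E[(W^I)^2]\le C(\#I)^2$, which gives $\P(\tau=\infty)\ge 9/(16C)$; then choose $\delta$ small enough that $e^{-2\delta n}\ge 1-9/(16C)$ for $n\le n_0$ and $2e^{-\kappa n}\le e^{-2\delta n}$ for $n>n_0$. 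With this repair your proof is complete.
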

\begin{proof}
From classical results on birth and death processes (see \cite{athreya1972branching}
for instance), we know that for $i \in V$ the process $(N_t(i)e^{-(r-q)t})$ is a non 
negative martingale which converges to a random variable $W$ which is positive on the survival 
event (occurring with probability $(r-q)/r$).
Let us introduce the random variables,
$$M(i):=\sup_{t\geq 0} N_t(i)e^{-(r-q)t}\quad \mbox{ and } \quad m(i)=\inf_{t\geq 0} N_t(i)e^{-(r-q)t}.$$
$(M(i) : i \in V)$ and $(m(i): i \in V)$ are both sequences of finite nonnegative i.i.d.
random variables with finite expectation. 
Moreover, if we introduce, for $i \in V$, the events:
$$ V_\infty(i):= \{ N_t(i) \geq 1, \forall t\geq 0 \} \quad \text{and} \quad 
M_\infty(i):= \{ \exists t<\infty, N_t(i) = 0 \}, $$
and the set
$$ V_\infty:=\{ i \in I, V_\infty(i) \text{ holds}\}, $$
we have that
$ 0=m(i)\leq M(i) $ on the event $M_\infty(i)$, and $0<m(i)\leq M(i)$ on the event $V_\infty(i)$.
As a consequence, for any $\eps \in (0,1)$ and $t \geq 0$, using also that $J\subset I$, we have 
\begin{align*} \mathbf{1}_{\{\sum_{i\in I} N_t(i)>0\}} \frac{\sum_{i\in J} N_t(i)}{\sum_{i\in I} N_t(i)}&\leq  \mathbf{1}_{\{\#V_\infty>\eps \#I\}} \frac{\sum_{i\in J} M(i)}{\sum_{i\in I} m(i)}+\mathbf{1}_{\{\#V_\infty\leq \eps \#I\}} \\
&=  \frac{\#J}{\#I}\left(\mathbf{1}_{\{\#V_\infty>\eps \#I\}}\frac{\sum_{i\in J} M(i)}{\# J} \frac{\# I}{\sum_{i\in I} m(i)}+\mathbf{1}_{\{\#V_\infty\leq \eps \#I\}}\frac{\#I}{\#J}\right).
\end{align*}
Hence, we can bound $\mathfrak{P}$ as follows:
\begin{align}\nonumber\label{eq:app1}
\mathfrak{P}(x,\#J,\#I) \leq  & \P \left( \mathbf{1}_{\{\#V_\infty>\eps \#I\}}\frac{\sum_{i\in J} M(i)}{\# J} \frac{\# I}{\sum_{i\in I} m(i)} \geq \frac{\#I}{\#J}\frac{x}{2} \right)\\
&+ \P \left(\mathbf{1}_{\{\#V_\infty\leq \eps \#I\}}\frac{\#I}{\#J}\geq \frac{\#I}{\#J}\frac{x}{2}\right).
\end{align}

To handle the first term on the right-hand side of \eqref{eq:app1}, we define for $y\geq 0$
$$G(y)=\sup \Big\{  \P\Big(\mathbf{1}_{\{\#V_\infty>\eps \#I\}}\frac{\sum_{i\in J} M(i)}{\# J} \frac{\# I}{\sum_{i\in I} m(i)}\geq y\Big)  : J, I\subset V; \#I < \infty\Big\}.$$
By the law of large numbers, the sequence
$$\mathbf{1}_{\{\#V_\infty>\eps \#I\}}\frac{\sum_{i\in J} M(i)}{\# J} \frac{\# I}{\sum_{i\in I} m(i)}$$
is uniformly tight. So $G(y)\rightarrow  0$ as $y\rightarrow \infty$.

For the second term on the right-hand side of \eqref{eq:app1}, Markov's inequality yields
\begin{align*}
\P \left(\mathbf{1}_{\{\#V_\infty\leq \eps \#I\}}\frac{\#I}{\#J}\geq \frac{\#I}{\#J}\frac{x}{2}\right)\leq \P \left(\#V_\infty\leq \eps\#I\right) \frac{2}{x}.
\end{align*}
To bound the last term, we recall that $\# V_\infty$ is a sum of $\#I$ independent Bernoulli random variables with parameter $1-q/r$.
For $\eps \leq 1-q/r$, using Hoeffding's inequality, we obtain
\begin{align*}
\P \left(\#V_\infty\leq \eps\#I\right)\leq \exp\left(-2\#I\left(1-\frac{q}{r}-\eps\right)^2\right),
\end{align*}
and it concludes the proof.
\end{proof}

\section*{Acknowledgments}
The authors are grateful to V. Bansaye for his advice and comments, to B. Cloez for fruitful discussions, and to two anonymous referees for their suggestions, which helped improve the quality of the manuscript. 
This work was partially funded by the Chair "Mod\'elisation Math\'ematique et Biodiversit\'e" of VEOLIA-Ecole Polytechnique-MNHN-F.X. and by the French national research agency (ANR) via project ANR NOLO (ANR-20-CE40-0015) and by LabEx PERSYVAL-Lab (ANR-11-LABX-0025-01) funded by the French program Investissement d’avenir.

\bibliographystyle{abbrv}
\bibliography{biblio}

\end{document}